\newtheorem{theorem}{Theorem}[section]
\newtheorem{lemma}[theorem]{Lemma}
\newtheorem{corollary}[theorem]{Corollary}
\author{Tianlong Ma\affiliationmark{1}
  \and Yaping Mao\affiliationmark{2,4}\footnote{Corresponding author}\thanks{Supported by the National Science Foundation of China
(Nos. 11601254, 11661068, 61763041 and 11551001) and the Science
Found of Qinghai Province (No. 2019-ZJ-921)}
  \and Eddie Cheng\affiliationmark{3}
  \and Christopher Melekian\affiliationmark{3} }
\title[Fractional matching preclusion for generalized
  augmented cubes]{Fractional matching preclusion for generalized\\
  augmented cubes}
\affiliation{
  Department of Basic Research, Qinghai University, Xining, Qinghai 810016, China\\
  School of Mathematics and Statistics, Qinghai Normal University, Xining, Qinghai 810008, China\\
  Department of Mathematics and Statistics, Oakland University, Rochester, MI USA 48309\\
  Academy of Plateau Science and Sustainability, Xining, Qinghai 810008, China}
\keywords{Matching; Fractional matching preclusion; Fractional strong matching preclusion;
Generalized augmented cube}
\begin{document}
\publicationdetails{21}{2019}{4}{6}{5074}
\maketitle
\begin{abstract}
  The \emph{matching preclusion number} of a graph is the minimum
number of edges whose deletion results in a graph that has neither
perfect matchings nor almost perfect matchings. As a generalization,
Liu and Liu (2017)
 recently introduced the concept of
fractional matching preclusion number. The \emph{fractional matching
preclusion number} of $G$ is the minimum
number of edges whose deletion leaves the resulting graph without a
fractional perfect matching. The \emph{fractional strong matching
preclusion number} of $G$ is the minimum
number of vertices and edges whose deletion leaves the resulting
graph without a fractional perfect matching. In this paper, we obtain the fractional matching
preclusion number and the fractional strong matching preclusion number for generalized augmented cubes. In addition, all the optimal fractional strong matching preclusion sets of these graphs are categorized.
\end{abstract}

\section{Introduction}
\label{sec:in}

\label{sec:in}
Parallel computing is an important area of computer science and engineering. The underlying
topology of such a parallel machine or a computer network is the interconnection network. Computing nodes are processors where the resulting system is a multiprocessor supercomputer, or
they can be computers in which the resulting system is a computer network. It is unclear where the computing future is headed. It may lead to more research in multiprocessor supercomputers, physical networks or networks in the cloud. Nevertheless, the analysis of such networks will always be important. One important aspect of network analysis is fault analysis, that is, the study of how faulty processors/links will affect the structural properties of the underlying interconnection networks, or simply graphs.

All graphs considered in this paper are undirected, finite and
simple. We refer to the book \cite{Bondy} for graph theoretical
notations and terminology not defined here. For a graph $G$, let
$V(G)$, $E(G)$, and $(u,v)$ ($uv$ for short) denote the set of
vertices, the set of edges, and the edge whose end vertices are $u$ and $v$,
respectively. We use $G-F$ to denote the subgraph of $G$
obtained by removing all the vertices and (or) the edges of $F$. We denote by $C_n$ the cycle with $n$ vertices. A cycle (respectively, path) in $G$ is called a Hamiltonian cycle (respectively, Hamiltonian path)
if it contains every vertex of $G$ exactly once. We divide our introduction into the following three
subsections to state the motivations and our results of this paper.

\subsection{(Strong) matching preclusion number}

A \emph{perfect matching} in a graph is a set of edges such that
each vertex is incident to exactly one of them, and an
\emph{almost-perfect matching} is a set of edges such that each
vertex, except one, is incident to exactly one edge in the set, and the
exceptional vertex is incident to none. A graph with an even number of vertices is an \emph{even graph};
otherwise it is an \emph{odd graph}. Clearly an even graph cannot have an almost perfect matching
and an odd graph cannot have a perfect matching. A \emph{matching preclusion
set} of a graph $G$ is a set of edges whose deletion leaves $G$ with
neither perfect matchings nor almost-perfect matchings, and the
\emph{matching preclusion number} of a graph $G$, denoted by $mp(G)$
is the size of a smallest matching preclusion set  of $G$.
The concept of matching preclusion was introduced by
\cite{brigham2005perfect} as a measure of robustness of interconnection
networks in the event of edge failure. It also
has connections to a number of other theoretical topics, including
conditional connectivity and extremal graph theory. We refer the
readers to \cite{cheng2007matching, cheng2009conditional, jwo1993new,li2016matching,mao2018strong, wang2019matching} for
further details and additional references.

A matching preclusion set of minimum cardinality is called
\emph{optimal}. For graphs with an even number of vertices, one can see the set of edge incident
to a single vertex is a matching preclusion set; such a set is called a \emph{trivial matching preclusion
set}. A graph $G$ satisfying $mp(G) = \delta(G)$ is said to be
\emph{maximally matched}, and in a maximally matched graph some
trivial matching preclusion set is optimal. Furthermore, a graph $G$
is said to be \emph{super matched} if every optimal matching
preclusion set is trivial. Immediately we see that every super
matched graph is maximally matched. Being super matched is a
desirable property for any real-life networks, as it is unlikely
that in the event of random edge failure, all of the failed edges
will be incident to a single vertex. (Here one can think of vertices as processors in a parallel machines and edges as physical links.)

A set $F$ of edges and vertices of $G$ is a \emph{strong matching
preclusion set} (SMP set for short) if $G-F$ has neither perfect
matchings nor almost-perfect matchings. The \emph{strong matching
preclusion number} (SMP number for short) of $G$, denoted by
$smp(G)$, is the minimum size of SMP sets of $G$. An SMP set is
optimal if $|F|=smp(G)$. The problem of strong matching preclusion
set was proposed by \cite{park2011strong}.
We remark that if $F$ is an optimal strong matching preclusion set, then we may assume that no edge in $F$ is incident to a vertex in $F$.
According to the
definition of $mp(G)$ and $smp(G)$, we have that $smp(G)\leq mp(G)\leq \delta(G)$. We say a graph is \emph{strongly maximally matched} if
$smp(G)=\delta(G)$. If $G-F$ has isolated vertices and $F$ is an optimal strong matching preclusion set, then $F$ is \emph{basic}. If, in addition, $G$
is even and  $F$ has an even number of vertices, then $F$ is \emph{trivial}. A strongly maximally matched even graph is \emph{strongly super matched}
if every optimal strong matching preclusion set is trivial.

\subsection{Fractional (strong) matching
preclusion number}

A standard way to consider matchings in polyhedral combinatorics is as follows. Given a set of edges $M$ of $G$, we define $f^M$ to be the indicator function of $M$, that is, $f^M:E(G)\longrightarrow \{0,1\}$ such that
$f^M(e)=1$ if and only if $e\in M$. Let $X$ be a set of vertices of $G$. We denote $\delta'(X)$ to be the set of edges with exactly one end in $X$. If $X=\{v\}$, we write $\delta'(v)$ instead of $\delta'(\{v\})$. (We remark that it
is common to use $\delta(X)$ is the literature. However, since it is also common to use $\delta(G)$ to denote the minimum degree of vertices in $G$. Thus we choose to use $\delta'$ for this purpose.)
Thus $f^M:E(G)\longrightarrow \{0,1\}$ is the indicator function of the perfect matching $M$ if
$\sum_{e\in\delta'(v)} f^M(e)=1$ for each vertex $v$ of $G$. If we replace ``$=$'' by ``$\leq$,'' then $M$ is a \emph{matching} of $G$. Now
$f^M:E(G)\longrightarrow \{0,1\}$ is the indicator function of the almost perfect matching $M$ if
$\sum_{e\in\delta'(v)} f^M(e)=1$ for each vertex $v$ of $G$, except one vertex say $v'$, and $\sum_{e\in\delta'(v')} f^M(e)=0$.
It is also common to use $f(X)$ to denote $\sum_{x\in X} f(x)$.
We note that it follows from the definition that $f^M(E(G))=\sum_{e\in E(G)} f^M(e)$ is $|M|$ for a matching $M$. In particular, $f^M(E(G))=|V(G)|/2$ if $M$ is a perfect matching and
 $f^M(E(G))=(|V(G)-1)|/2$ if $M$ is an almost perfect matching.

A standard relaxation from an integer setting to a continuous setting is to replace the codomain of the indicator function from $\{0,1\}$ to the interval $[0,1]$. Let $f:E(G)\longrightarrow [0,1]$. Then $f$ is a \emph{fractional matching}
if $\sum_{e\in\delta^{'}(v)} f(e)\leq 1$ for each vertex $v$ of $G$; $f$ is a \emph{fractional perfect matching}
if $\sum_{e\in\delta^{'}(v)} f(e)=1$ for each vertex $v$ of $G$; and $f$ is an \emph{fractional almost perfect matching}
if $\sum_{e\in\delta^{'}(v)} f(e)=1$ for each vertex $v$ of $G$ except one vertex say $v'$, and $\sum_{e\in\delta^{'}(v^{'})} f(e)=0$. We note that if $f$ is a fractional perfect matching, then
\[ f(E(G))=\sum_{e\in E(G)} f(e)=\frac{1}{2}\sum_{v\in V(G)}\sum_{e\in \delta^{'}(v)} f(e)=\frac{|V(G)|}{2}; \]
and
if $f$ is a fractional almost perfect matching, then
\[ f(E(G))=\sum_{e\in E(G)} f(e)=\frac{1}{2}\sum_{v\in V(G)}\sum_{e\in \delta^{'}(v)} f(e)=\frac{|V(G)|-1}{2}. \]

We note that although an even graph cannot have an almost perfect matching, an even graph can have a fractional almost perfect matching. For example, let $G$ be the graph with two components, one with a $K_3$ and one with a $K_1$.
Now assign every edge a $1/2$, then the corresponding indicator function is a fractional almost perfect matching. Similarly, an odd graph can have a fractional perfect matching. Thus to generalize the concept of matching preclusion sets,
there are choices. In particular, should we preclude fractional perfect matchings only, or both fractional perfect matchings and fractional almost perfect matchings.
Recently, \cite{liu2017fractional} gave one such generalization. An edge subset $F$ of $G$ is a \emph{fractional matching preclusion
set} (FMP set for short) if $G-F$ has no fractional perfect
matchings. In addition, the \emph{fractional matching preclusion number} (FMP
number for short) of $G$, denoted by $fmp(G)$, is the minimum size
of FMP sets of $G$. So their choice was to preclude fractional perfect matchings only.

Let $G$ be an even graph. Suppose $F$ is an FMP set. Then $G-F$ has no fractional perfect matchings. In particular, $G-F$ has no perfect matchings. Thus $F$ is a matching preclusion set. Hence
\[ mp(G)\leq fmp(G). \]

 As pointed out in
\cite{liu2017fractional}, this inequality does not hold if $G$ is an odd graph. The reason is due to the definition. Here for the integer case, one precludes almost perfect matchings whereas for the fractional case, one precludes fractional perfect matchings.
So there is a mismatch. If one were to preclude perfect matchings even for the integer case, then the preclusion number is 0 and the inequality will holds. This is a minor point as in application to interconnection networks, only even graphs will
be considered. For the rest of the paper, we only consider even graphs. Since a graph with an isolated vertex cannot have fractional perfect matchings, we have $fmp(G)\leq\delta(G)$. Thus if $G$ is even, we have the following inequalities
\[ mp(G)\leq fmp(G)\leq \delta(G). \]
Therefore, if $G$ is maximally matched, then $fmp(G)=\delta(G)$.

\cite{liu2017fractional} also gave a generalization of strong matching preclusion.
A set $F$ of edges and vertices of $G$ is a \emph{fractional strong
matching preclusion set} (FSMP set for short) if $G-F$ has no
fractional perfect matchings. The \emph{fractional strong matching
preclusion number} (FSMP number for short) of $G$, denoted by
$fsmp(G)$, is the minimum size of FSMP sets of $G$. Again the fractional version preclude fractional perfect matchings only.  Since a fractional matching preclusion set is a fractional strong matching preclusion set, it is clear that
$$
fsmp(G)\leq fmp(G)\leq \delta(G).
$$
An FMP set $F$ is optimal if $|F|= fmp(G)$. If $fmp(G)=\delta(G)$, then $G$ is \emph{fractional maximally matched}; if, in addition, $G-F$ has isolated vertices for every optimal fractional matching
preclusion set $F$, then $G$ is \emph{fractional super matched}. An FSMP set $F$ is optimal if $|F|= fsmp(G)$.
If $fsmp(G)=\delta(G)$, then $G$ is \emph{fractional strongly maximally matched}; if, in addition, $G-F$ has an isolated vertices for every optimal fractional strong matching
preclusion set $F$, then $G$ is \emph{fractional strongly super matched}.
\subsection{Variants of Hypercubes}

The class of hypercubes is the most basic class of interconnection networks. However, hypercubes have shortcomings including embedding issues. A number of
variants were introduced to address some of these issues, and one popular variant is the class of augmented cubes
given by \cite{choudum2002augmented}. By design, the augmented cube graphs are superior in
many aspects. They retain many important properties of hypercubes and they possess some embedding
properties that the hypercubes do not have. For instance, an augmented cube of the $n$th dimension contains cycles of all lengths
from $3$ to $2^n$ whereas the hypercube contains only even cycles. As shown in \cite{park2011strong}, bipartite graphs are poor interconnection
networks with respect to the strong matching preclusion property. However, augmented cubes have good strong matching preclusion properties as shown in \cite{cheng2010matching}.

We now define the $n$-dimensional augmented cube $AQ_n$ as follows. Let $n\geq 1$, the graph $AQ_n$ has $2^n$ vertices, each labeled
by an $n$-bit $\{0,1\}$-string $u_1u_2\cdots u_n$. Then $AQ_1$ is isomorphic to the complete graph $K_2$ where one
vertex is labeled by the digit $0$ and the other by $1$. For $n\geq 2$, $AQ_n$ is defined recursively by using two copies of $(n-1)$-
dimensional augmented cubes with edges between them. We first add the digit $0$ to the beginning of the binary strings of all
vertices in one copy of $AQ_{n-1}$, which will be denoted by $AQ^0_{n-1}$, and add the digit $1$ to the beginning of all the vertices of the
second copy, which will be denoted by $AQ^1_{n-1}$.
We call simply $AQ^0_{n-1}$ and $AQ^1_{n-1}$ two copies of $AQ_{n}$. We now describe the edges between these two copies. Let $u=0u_1u_2\cdots u_{n-1}$
and $v=1v_1v_2\cdots v_{n-1}$ be vertices in $AQ^0_{n-1}$ and $AQ^1_{n-1}$, respectively. Then $u$ and $v$ are adjacent if and only if one of the
following conditions holds:
\begin{itemize}
\item[] (1) $u_i=v_i$ for every $i\geq 1$. In this case, we call the edge $(u, v)$ a \emph{cross edge} of $AQ_{n}$ and say $u=v^x$ and $v=u^x$.

\item[] (2) $u_i\neq v_i$ for every $i\geq 1$. In this case, we call $(u, v)$ a \emph{complement edge} of
$AQ_{n}$ and denote $u=v^c$ and $v=u^c$. (Here we use the notation $v^c$ to means the complement of $v$, that is every 0 becomes
a 1 and every 1 becomes a 0.)
\end{itemize}

Clearly $AQ_n$ is $(2n-1)$-regular and it is known that $AQ_n$ is vertex transitive. Another important fact is that the connectivity of $AQ_n$ is $2n-1$ for $n\geq 4$. Some recent papers on augmented cubes include
\cite{angjeli2013linearly,chang2010conditional,cheng2010matching,cheng2013strong,hsieh2007cycle,hsieh2010conditional,ma2007panconnectivity,ma2008super}. A few examples of augmented cubes are shown in Fig. 1. We note that without the complement edges, it coincides with the recursive definition of hypercubes. We note that
a non-recursive classification of a complement edge $(u,v)$ is $u=ab$ and $v=ab^c$ where $a$ is a (possibly empty) binary string and $b$ is an non-empty binary string. (Here $ab$ is the usual concatenation notation of $a$
and $b$.)

In fact, augmented cubes can be further generalized. The cross edges and complement edges are edge disjoint perfect matchings and they can be replaced by other edges. We define the set ${\cal GAQ}_4=\{AQ_4\}$. For $n\geq 5$, ${\cal GAQ}_n$ consists of all graphs that can be obtained in the following way: Let $G_1,G_2\in {\cal GAQ}_{n-1}$, where $G_1=(V_1,E_1)$ and $G_2=(V_2,E_2)$ may not be distinct; construct the graph $(V_1\cup V_2,E_1\cup E_2\cup M_1\cup M_2)$ where $M_1$ and $M_2$
are edge disjoint perfect matchings between $V_1$ and $V_2$. It follows from the definition that if $G\in {\cal GAQ}_n$, then $G$ is a $(2n-1)$-regular graph on $2^n$ vertices.
These are the \emph{generalized augmented cubes}. In this paper, we study the fractional strong matching preclusion problem for these graphs.

\begin{figure}[htbp]
  \begin{center}
    \includegraphics[scale=0.9]{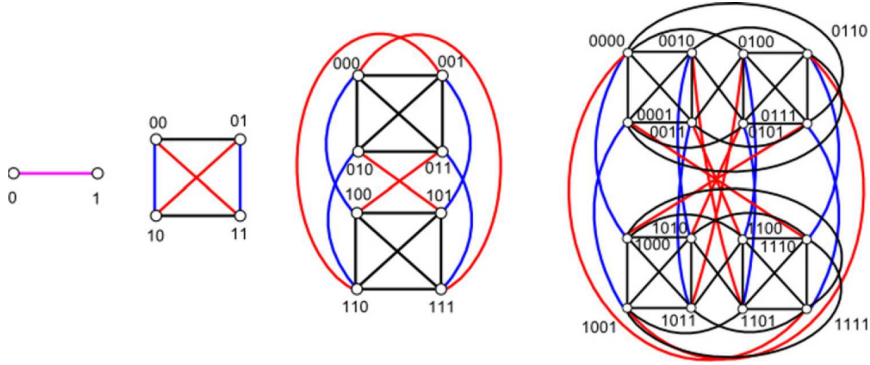}
    \caption{ Augmented cubes of dimensions 1 through 4.}
    \label{fig:logo}
  \end{center}
\end{figure}

\section{Related results}

\cite{park2011strong} obtained the following result.

\begin{theorem}\emph{ (\cite{park2011strong})}\label{pro1-3}
Suppose $n\geq 2$, then $smp(K_n)=n-1$.
\end{theorem}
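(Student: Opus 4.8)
The plan is to prove the two inequalities $smp(K_n)\le n-1$ and $smp(K_n)\ge n-1$ separately. For the upper bound I would exhibit a strong matching preclusion set of size $n-1$, splitting into two cases by the parity of $n$. When $n$ is even, take $F=\delta'(v)$ for any vertex $v$; then $K_n-F$ is the disjoint union of the isolated vertex $v$ and a copy of $K_{n-1}$ on an odd number of vertices, so it has two odd components and hence neither a perfect nor an almost-perfect matching. When $n$ is odd I instead delete one vertex $v$ together with the $n-2$ edges joining a second vertex $w$ to the remaining vertices other than $v$; this $F$ has size $1+(n-2)=n-1$, and $K_n-F$ is the disjoint union of the isolated $w$ and a copy of $K_{n-2}$ on an odd number of vertices, again producing two odd components. (For odd $n$ one cannot invoke the chain $smp\le mp\le\delta$ here, since $mp(K_n)\le\delta(K_n)$ already fails for odd graphs; the explicit construction is genuinely needed.)

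For the lower bound I would assume, for contradiction, a strong matching preclusion set $F$ with $|F|\le n-2$ and show $K_n-F$ still has deficiency at most $1$ (that is, a perfect or an almost-perfect matching). Write $F=F_V\cup F_E$ with $s=|F_V|$ vertices and $t=|F_E|$ edges, so $s+t\le n-2$. Deleting the vertices first, $H:=K_n-F=K_m-F_E$ with $m=n-s\ge 2$, and after discarding any edges of $F_E$ incident to $F_V$ we may regard $F_E$ as a set of $t\le m-2$ edges of $K_m$. It therefore suffices to establish the reduction: \emph{if $t\le m-2$, then $K_m$ with any $t$ edges removed has deficiency at most $1$.}

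To prove the reduction I would apply the classical Berge--Tutte formula. If the deficiency were at least $2$, there would be a set $S$ with $|S|=p$ and $o(H-S)\ge p+2$, where $o(\cdot)$ counts the odd-order components; since $H-S$ has $m-p$ vertices this forces $2p+2\le m$. Now $H-S$ is $K_{m-p}$ with some of the $t$ edges deleted, and every edge of $K_{m-p}$ running between two of these components must lie in $F_E$. The number of such inter-component edges is minimized, over all partitions of the $m-p$ vertices into at least $p+2$ classes, by one large class together with $p+1$ singletons, which yields
\[ \binom{m-p}{2}-\binom{m-2p-1}{2}=\frac{(p+1)(2m-3p-2)}{2}. \]
A direct estimate shows this is at least $m-1$ for every integer $p$ with $0\le p\le (m-2)/2$: indeed its difference from $m-1$ equals $\tfrac{p}{2}(2m-5-3p)$, which is nonnegative throughout this range. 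Hence $t\ge m-1$, contradicting $t\le m-2$, so the deficiency of $H$ is at most $1$ and $F$ cannot be a preclusion set.

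The main obstacle is this counting step in the lower bound. The naive estimate ``it takes at least $m-p-1$ deleted edges to disconnect $K_{m-p}$'' is too weak as soon as $p\ge 1$, so the whole argument hinges on the sharper bound $\tfrac12(p+1)(2m-3p-2)\ge m-1$ holding \emph{uniformly} across the admissible range $0\le p\le (m-2)/2$. Verifying it carefully — including the boundary values $p=0$ (where equality occurs) and the integrality constraint $2p+2\le m$ that rules out the otherwise troublesome continuous endpoint — is where the real work lies; the upper bound and the Berge--Tutte setup are routine by comparison.
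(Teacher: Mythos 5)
The paper does not prove this statement at all --- Theorem~\ref{pro1-3} is imported verbatim from \cite{park2011strong} and used as a black box, so there is no in-paper argument to compare yours against. Judged on its own, your proof is correct and complete. The upper-bound constructions for both parities are right (and you are right that for odd $n$ one cannot just quote $smp\le\delta$, since the chain of inequalities in the paper is only asserted for even graphs). The lower bound is also sound: the reduction to ``$K_m$ minus $t\le m-2$ edges has deficiency at most $1$'' is legitimate, the Berge--Tutte setup forces $2p+2\le m$, the extremal partition (one large class plus $p+1$ singletons) is correct by convexity of $x\mapsto\binom{x}{2}$, and the identity
\[
\binom{m-p}{2}-\binom{m-2p-1}{2}-(m-1)=\frac{p\,(2m-5-3p)}{2}\ge 0
\]
checks out on the whole admissible range, since $p\ge 1$ together with $m\ge 2p+2$ gives $2m-5-3p\ge p-1\ge 0$, and $p=0$ gives equality. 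So $t\ge m-1$ whenever the deficiency is at least $2$, contradicting $t\le m-2$. The one presentational caveat is that you invoke the Berge--Tutte deficiency formula, whereas the paper only records Tutte's $1$-factor theorem (Theorem~\ref{po}); you should either cite Berge--Tutte explicitly or note that the deficiency version follows from Tutte's theorem by the standard trick of adjoining a dominating vertex. This is a minor sourcing issue, not a gap.
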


\cite{cheng2010matching} investigated the matching preclusion number of $AQ_n$ for $n\geq 1$.
\begin{theorem}\emph{(\cite{cheng2010matching})}\label{th1-1}
Let $n\geq 1$. Then $mp(AQ_n) = 2n-1$, that is, $AQ_n$ is maximally matched. If $n\geq 3$, then every optimal
matching preclusion set is trivial, that is, $AQ_n$ is super matched.
\end{theorem}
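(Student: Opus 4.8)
The plan is to prove both parts by induction on $n$, using the recursive decomposition in which $AQ_n$ is the union of two copies $AQ^0_{n-1}$ and $AQ^1_{n-1}$ together with two edge-disjoint perfect matchings between them: the matching $M_x$ of cross edges $uu^x$ and the matching $M_c$ of complement edges $uu^c$. The key fact I will exploit repeatedly is that each of $M_x$ and $M_c$ is by itself a perfect matching of the \emph{whole} graph $AQ_n$, using no edge inside either copy. Since $AQ_n$ has the even order $2^n$ and is $(2n-1)$-regular, the upper bound is immediate: for any vertex $v$ the set $\delta'(v)$ of its $2n-1$ incident edges is a matching preclusion set, so $mp(AQ_n)\le 2n-1=\delta(AQ_n)$. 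Everything else is a lower bound together with a structural classification.

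For the bound $mp(AQ_n)\ge 2n-1$ I would show that every $F\subseteq E(AQ_n)$ with $|F|\le 2n-2$ leaves a perfect matching. After checking the small cases $n=1,2,3$ directly (note $AQ_2\cong K_4$), write $F=F_0\cup F_1\cup F_b$ for the faults inside $AQ^0_{n-1}$, inside $AQ^1_{n-1}$, and among the connecting edges $M_x\cup M_c$. If $|F_0|\le 2n-4$ and $|F_1|\le 2n-4$, then since each copy is a maximally matched $AQ_{n-1}$ (induction hypothesis, $mp(AQ_{n-1})=2n-3$), each copy minus its faults has a perfect matching, and the union is a perfect matching of $AQ_n-F$. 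Otherwise one copy, say $AQ^0_{n-1}$, carries at least $2n-3$ faults, forcing $|F_b|\le 1$; then at least one of the two global matchings $M_x,M_c$ is fault-free and is itself a perfect matching of $AQ_n-F$. This disposes of the lower bound cleanly.

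For the super-matched claim ($n\ge 3$) I would assume $|F|=2n-1$ and $AQ_n-F$ has no perfect matching, and show $F=\delta'(v)$ for some $v$. The same split shows that if both copies carry at most $2n-4$ faults a perfect matching survives, so some copy, say $AQ^0_{n-1}$, has at least $2n-3$ faults; and since $n\ge 3$ the two copies cannot both be this heavily faulted, so the choice is without loss of generality. If $|F_b|\le 1$ one of $M_x,M_c$ again gives a perfect matching, and $|F_0|\ge 2n-2$ likewise leaves a connecting matching fault-free. Hence the only surviving configuration is $|F_0|=2n-3$, $|F_1|=0$, and $|F_b|=2$ consisting of exactly one faulty cross edge and one faulty complement edge. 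The task reduces to proving that this configuration precludes a perfect matching only when $F_0$ isolates a single vertex $v$ of $AQ^0_{n-1}$ and the two boundary faults are exactly $vv^x$ and $vv^c$, that is, when $F$ is the trivial set $\delta'(v)$.

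The main obstacle is precisely this last reduction. Here I would take one connecting matching, say $M_c$ minus its one faulty edge, as a backbone: it matches all of $AQ_n$ except the two endpoints of that edge, and it avoids the internal faults entirely. It then remains to re-match the two uncovered vertices by a short alternating-path correction that reroutes through the intact copy $AQ^1_{n-1}$ and, when needed, the surviving cross edges. The delicate point is to show that such a correction always succeeds \emph{unless} $F_0$ has already isolated a vertex and both of its connecting edges happen to be the faulty ones; the rich matching structure of the fault-free copy, together with the fact that only finitely many edges obstruct the reroute, is what makes the correction go through in every other case. The base case $n=3$ is verified directly, confirming that the only size-$5$ preclusion sets are the vertex stars.
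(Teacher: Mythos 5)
The paper does not actually prove this statement: Theorem~\ref{th1-1} is imported verbatim from \cite{cheng2010matching} and used as a black box, so there is no in-paper proof to compare yours against. Judged on its own, your inductive argument is sound and is the natural "recursive cube" proof, in the same spirit as the paper's own Theorems~\ref{Th3.2} and~\ref{Th3.3} for the fractional version (split $F$ into $F_0$, $F_1$ and the boundary faults; handle the balanced case by induction inside each copy; handle the unbalanced case with the two global perfect matchings $M_x$, $M_c$). The counting in your case analysis is correct: $|F|\le 2n-2$ with one copy carrying at least $2n-3$ faults forces $|F_b|\le 1$, so one of $M_x,M_c$ survives intact, and for $|F|=2n-1$ the only configuration not killed this way is $|F_0|=2n-3$, $|F_1|=0$, $|F_b|=2$ with exactly one faulty edge in each of $M_x$ and $M_c$.

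The one step you leave genuinely unproved --- the ``delicate'' reroute --- does go through, and more easily than you suggest. Let $uu^c$ be the faulty complement edge with $u\in AQ^0_{n-1}$. Since bitwise complementation is an automorphism of $AQ_{n-1}$ (adjacency in $AQ_m$ depends only on which coordinates agree), $a\sim u$ in $AQ^0_{n-1}$ implies $a^c\sim u^c$ in $AQ^1_{n-1}$; hence for any in-copy neighbour $a$ of $u$ with $ua\notin F_0$, the set $(M_c\setminus\{uu^c,aa^c\})\cup\{ua,a^cu^c\}$ is a fault-free perfect matching. This fails only if all $2n-3$ in-copy edges at $u$ lie in $F_0$, i.e.\ $F_0=\delta'_{AQ^0_{n-1}}(u)$. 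The symmetric argument with the $M_x$ backbone and the faulty cross edge $ww^x$ fails only if $F_0=\delta'_{AQ^0_{n-1}}(w)$. Both can fail simultaneously only if $u=w$ (two distinct vertex stars of size $2n-3\ge 3$ cannot coincide), in which case $F=\delta'(u)$ is trivial. So your skeleton is complete once you add the automorphism observation and this degree count; nothing in the approach breaks.
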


\cite{cheng2013strong} investigated the strong matching preclusion number of $AQ_n$ for $n\geq 4$.

\begin{theorem}\emph{(\cite{cheng2013strong})}\label{th1-2a}
Let $n\geq 4$. Then $smp(AQ_n)=2n-1$, that is, $AQ_n$ is strongly maximally matched.
\end{theorem}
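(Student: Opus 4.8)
The upper bound requires no work: Theorem~\ref{th1-1} gives $mp(AQ_n)=2n-1$, and since $smp(G)\le mp(G)$ holds for every graph, $smp(AQ_n)\le 2n-1$. As $AQ_n$ is $(2n-1)$-regular we also have $\delta(AQ_n)=2n-1$, so the entire content of the theorem is the lower bound $smp(AQ_n)\ge 2n-1$. Equivalently, I would prove that for every set $F$ of vertices and edges with $|F|\le 2n-2$, the graph $AQ_n-F$ still possesses a perfect matching or an almost-perfect matching. One observation is free and will be used throughout: because $\delta(AQ_n)=2n-1$, isolating a single vertex (by deleting incident edges and/or neighbours) costs at least $2n-1$ deletions, so $|F|\le 2n-2<2n-1$ guarantees that $AQ_n-F$ has no isolated vertex, which removes the cheapest obstruction to a near-perfect matching.

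The plan is to argue by induction on $n$, with $n=4$ as the base case. I would handle $n=4$ directly rather than by recursion, since $AQ_3$ is too small for the statement (its connectivity falls below $2\cdot 3-1$); using that $AQ_4$ is $7$-regular, $7$-connected and vertex-transitive, I would verify through a structured analysis of the fault distribution that no $F$ with $|F|\le 6$ destroys every near-perfect matching. For the inductive step ($n\ge 5$), write $AQ_n=AQ_{n-1}^0\cup AQ_{n-1}^1$ together with the two edge-disjoint perfect matchings joining the copies, namely the cross edges and the complement edges. Split $F$ as $F=F_0\cup F_1\cup F_c$, where $F_i$ lies inside $AQ_{n-1}^i$ and $F_c$ consists of faulty crossing edges; assume without loss of generality $|F_0|\le|F_1|$, so that $2|F_0|\le|F_0|+|F_1|\le 2n-2$ forces $|F_0|\le n-1\le 2n-4$. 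Since the inductive hypothesis reads $smp(AQ_{n-1})=2n-3$, the copy $AQ_{n-1}^0-F_0$ always has a near-perfect matching, and the argument then splits according to how heavily loaded $AQ_{n-1}^1$ is.

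In the balanced case $|F_1|\le 2n-4$, induction supplies near-perfect matchings $M_0$ and $M_1$ of the two faulty copies, and I would assemble them into a near-perfect matching of $AQ_n-F$ according to the parities of the numbers $a_0,a_1$ of deleted vertices in the two copies. If $a_0$ and $a_1$ are both even, or have opposite parity, the union $M_0\cup M_1$ already leaves at most one vertex exposed and we are done; the delicate subcase is $a_0,a_1$ both odd, where each $M_i$ exposes one vertex and I must splice the two exposed vertices using a single intact crossing edge. The difficulty is that an almost-perfect matching only guarantees \emph{some} exposed vertex, whereas a crossing edge joins a vertex only to its prescribed cross- and complement-neighbours. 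I would therefore strengthen the inductive statement so as to control which vertex is left exposed (equivalently, to produce near-perfect matchings avoiding a small prescribed set), and then use the abundance of crossing edges---two disjoint perfect matchings, at most $2n-2$ of whose edges are faulty---to choose a legitimate splice.

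The remaining, unbalanced case $|F_1|\in\{2n-3,2n-2\}$ is where I expect the real work to lie. Here the inductive hypothesis cannot be applied to $AQ_{n-1}^1$, but the bound $|F_0|+|F_c|\le 2n-2-|F_1|\le 1$ shows that $AQ_{n-1}^0$ is essentially fault-free and that almost every crossing edge survives. My plan is to route each good vertex of $AQ_{n-1}^1$ across an intact crossing edge into the nearly pristine copy $AQ_{n-1}^0$, and then complete the matching inside $AQ_{n-1}^0$ on the vertices not consumed by these crossing edges. The main obstacle is precisely this completion step: one must guarantee that the leftover vertices of $AQ_{n-1}^0$ induce a matchable subgraph, which cannot be read off from a black-box count of $smp(AQ_{n-1})$ and instead calls on the regularity, vertex-transitivity and connectivity of $AQ_{n-1}^0$ (again most cleanly packaged as a strengthened inductive hypothesis about matchings that saturate a prescribed vertex set). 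Once this case is settled, the lower bound follows and the theorem is proved.
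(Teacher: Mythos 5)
First, note that the paper does not prove Theorem~\ref{th1-2a} at all: it is quoted from \cite{cheng2013strong} as a known external result, so there is no internal proof to compare your argument against. Judged on its own terms, your proposal is a sensible high-level plan in the standard ``split the faults between the two copies'' style, and your upper-bound reduction ($smp(AQ_n)\le mp(AQ_n)=2n-1$ via Theorem~\ref{th1-1} and regularity) is correct. But there are genuine gaps at exactly the two places you yourself flag as delicate, and in both places you defer to ``a strengthened inductive hypothesis'' that you never state. That is the whole ballgame: the induction cannot be run until that strengthened statement is written down and shown to be simultaneously strong enough to close the parity splice and weak enough to be provable. Concretely, in the balanced case with $a_0,a_1$ both odd you need each copy to admit an almost-perfect matching exposing a vertex \emph{of your choosing} (or one of a controlled set reachable by a surviving crossing edge); the natural way to force an exposed vertex $v$ is to apply the hypothesis to $F_i\cup\{v\}$, but that set can have size $2n-3$, which already equals $smp(AQ_{n-1})$, so the plain inductive hypothesis does not apply and the argument stalls precisely there.

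The unbalanced case as you describe it is also not workable as stated. If $|F_1|=2n-2$ then all faults sit in $H_1$, and you propose to match every surviving vertex of $H_1$ across an intact crossing edge into $H_0$ and then complete inside $H_0$. The leftover set in $H_0$ then consists of roughly $a_1$ essentially uncontrolled vertices (determined only by which of the two crossing matchings you used at each vertex), and there is no reason the induced subgraph on such a set has a perfect matching; for $a_1=2$ the two leftover vertices may simply be nonadjacent. The standard repair --- and the one this paper itself uses in its inductive arguments, Theorems~\ref{Th3.2} and~\ref{Th3.3} --- is the reverse: discard a few elements of $F_1$ to bring its size down to $2n-4$, apply induction inside $H_1$ to obtain a matching $M_1$, and then locally reroute only the handful of $M_1$-edges spoiled by the discarded faults across to $H_0$, finishing with a matching of $H_0$ minus those few endpoints (again by induction, since only a bounded number of vertices are removed on that side). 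Until the strengthened hypothesis is formulated and the unbalanced case is recast along those lines, the proposal is an outline rather than a proof.
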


We remark that the result given by \cite{cheng2013strong} is actually stronger as it also classify all the optimal strong matching preclusion sets.
Note that $AQ_1$ and $AQ_2$ are isomorphic to $K_2$ and $K_4$, respectively, so we acquire $smp(AQ_1)=1$ and $smp(AQ_2)=3$ by Theorem \ref{pro1-3}.
Theorem \ref{th1-2a} can be generalized to include generalized augmented cubes.

\begin{theorem}\emph{(\cite{chang2015strong})}\label{th1-3}
Let $n\geq 4$ and $G\in {\cal GAQ}_n$. Then $smp(G)=2n-1$, that is, $G$ is strongly maximally matched.
\end{theorem}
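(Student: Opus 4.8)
The plan is to induct on $n\ge 4$ using the recursive construction of ${\cal GAQ}_n$. The base case $n=4$ is immediate: ${\cal GAQ}_4=\{AQ_4\}$, so $smp(AQ_4)=7=2\cdot 4-1$ is precisely Theorem \ref{th1-2a}. Since the general inequality $smp(G)\le mp(G)\le\delta(G)=2n-1$ already gives the upper bound (deleting the $2n-1$ edges at a vertex isolates it, and an even graph with an isolated vertex has neither a perfect nor, being even, an almost-perfect matching), the whole content is the lower bound $smp(G)\ge 2n-1$. Equivalently, I must show that for every set $F$ of vertices and edges with $|F|\le 2n-2$, the graph $G-F$ still has a perfect or almost-perfect matching; throughout I read this through the Tutte--Berge formula, namely that $G-F$ has such a matching exactly when its deficiency $\max_{S}\,\big(o((G-F)-S)-|S|\big)$ is at most $1$. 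Guided by the remark after Theorem \ref{th1-2a} that the optimal SMP sets of $AQ_n$ are fully classified, I would carry a strengthened induction hypothesis — not merely $smp(G')=2n-3$ for $G'\in{\cal GAQ}_{n-1}$, but control on the deficiency of $G'-F'$ for $|F'|$ up to one beyond the threshold — since this is what the hard case below consumes.

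Write $G=(V_1\cup V_2,\,E_1\cup E_2\cup M_1\cup M_2)$ with $G_i=(V_i,E_i)\in{\cal GAQ}_{n-1}$ and $M_1,M_2$ the two cross perfect matchings, and split $F$ into $F_1=F\cap(V_1\cup E_1)$, $F_2=F\cap(V_2\cup E_2)$, and $F_M=F\cap(M_1\cup M_2)$, with $f_i=|F_i|$ and $f_M=|F_M|$. After relabelling the halves so that $f_1\le f_2$, we get $2f_1\le f_1+f_2\le 2n-2$, hence $f_1\le n-1\le 2n-4<smp(G_1)$, so by induction $G_1-F_1$ already has a perfect or almost-perfect matching. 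The argument then splits according to how heavily $F$ loads the second half.

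In the balanced case $f_2\le 2n-4$, induction also furnishes a near-perfect matching $N_2$ of $G_2-F_2$, and one lets $r_i\in\{0,1\}$ record whether $N_i$ exposes a vertex (forced by the parity of the number of vertices $F$ deletes from side $i$). When $(r_1,r_2)\ne(1,1)$ the union $N_1\cup N_2$ is already a matching of $G-F$ of the correct type. The one interactive possibility is $(1,1)$: $N_1$ and $N_2$ expose $w_1\in V_1$ and $w_2\in V_2$ while $G-F$ has even order, so the two exposed vertices must be paired; here I would select a surviving cross edge $w_1w_2\in(M_1\cup M_2)\setminus F_M$ with both ends outside $F$ — abundant, as $M_1$ and $M_2$ have $2^{n-1}$ edges each and at most $2n-2$ are disturbed — and then produce perfect matchings of $G_1-F_1-w_1$ and $G_2-F_2-w_2$ (here the boundary value $f_2=2n-4$ again calls on the strengthened hypothesis, since adjoining $w_2$ reaches the critical size $2n-3$), whose union with $w_1w_2$ finishes the case. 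In the unbalanced case $f_2\ge 2n-3$ we have $f_1+f_M\le 1$, so the first half is all but intact and essentially every cross edge survives, while $F_2$ may itself preclude matchings inside $G_2$. The idea is to rescue the vertices of the second half left exposed by a maximum matching of $G_2-F_2$ by sending each across a surviving cross edge to a distinct vertex of $V_1$, and then to absorb these into $G_1-F_1$, which as a barely-damaged $(2n-3)$-regular graph on $2^{n-1}$ vertices still matches after a bounded number of vertices are removed.

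The main obstacle is exactly this unbalanced case, and it is where the generalized setting bites. First, one needs a quantitative deficiency bound — that $|F_2|\le 2n-2$, only one beyond $smp(G_2)$, forces the deficiency of $G_2-F_2$ to stay small (matching parity, essentially at most $2$) — which does not follow from the number $smp(G_2)=2n-3$ alone and is precisely why the induction hypothesis must be strengthened to a deficiency estimate or a classification of near-optimal SMP sets. Second, one must realize the routing: a system of distinct surviving cross edges joining the few exposed vertices of one half to absorbable vertices of the other, together with the verification that the lightly-loaded half still matches after their removal. Because $G_1$ and $G_2$ are arbitrary members of ${\cal GAQ}_{n-1}$ rather than the coordinate-structured $AQ_{n-1}$, this routing cannot use any bit-pattern identity and must instead be extracted from the structural features common to all generalized augmented cubes — that every vertex has two cross-neighbours, one through $M_1$ and one through $M_2$, together with regularity and high connectivity — via a Hall-type argument. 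I expect this deficiency-plus-routing step, not the enumeration of cases, to be the crux, and I note that a crude toughness estimate does not suffice: bounding $\sum_j|\partial_G C_j|$ by $(|S|+f_V)(2n-1)+2f_E$ only yields $2(2n-1)\le f_V(2n-1)+2f_E$, which fails to force $|F|\ge 2n-1$ once $f_V\ge 1$, so the recursive structure is genuinely needed.
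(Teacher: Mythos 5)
First, a point of reference: the paper does not prove this statement at all --- it is quoted from \cite{chang2015strong} (the authors remark that it follows from Theorem 3.2 there together with $smp(AQ_4)=7$), so there is no in-paper proof to match your argument against. Judged on its own, your plan follows the standard recursive template (the same one the paper uses for the fractional analogues, Theorems \ref{Th3.2} and \ref{Th3.3}), and the base case, the upper bound, and the balanced case $f_2\le 2n-4$ with $(r_1,r_2)\ne(1,1)$ are fine. But there is a genuine gap exactly where you flag ``the crux'': in the unbalanced case $f_2\ge 2n-3$ you need a deficiency bound for $G_2-F_2$ with $|F_2|$ as large as $2n-2$, one beyond $smp(G_2)$, and you never establish it --- you only observe that it would have to be part of a strengthened induction hypothesis. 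Worse, the strengthening you propose to carry (a classification of optimal or near-optimal SMP sets of every member of ${\cal GAQ}_{n-1}$) is not actually available: the paper is explicit that the classification from \cite{chang2015strong} holds only for the restricted subclass in which the two cross matchings induce no $4$- or $6$-cycles, and that this hypothesis is genuinely used there. An induction that consumes such a classification at level $n-1$ therefore cannot deliver the theorem for all of ${\cal GAQ}_n$. The same difficulty, in milder form, infects your balanced subcase $(r_1,r_2)=(1,1)$ at the boundary $f_2=2n-4$, where $|F_2\cup\{w_2\}|$ reaches the critical size $2n-3$ and you again lean on the unproved strengthening.

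The workable repair --- visible in the paper's own proofs of Theorems \ref{Th3.2} and \ref{Th3.3} --- is to never apply the induction hypothesis to a fault set at or above the threshold. One deletes one or two well-chosen elements from the heavy side's fault set to drop its size to $2n-4$, obtains a genuine perfect (or almost perfect) matching $M$ of that less-faulty half by induction, and then repairs the at most two or three edges of $M$ that are destroyed when the deleted faults are restored, by routing their endpoints across surviving cross edges into the lightly-faulted half (each vertex has two cross neighbours, so the at most one fault outside the heavy side cannot block both). The obstructing configurations that survive this repair are then excluded by a concrete structural fact --- in the spirit of Lemma \ref{lem1} and Lemma \ref{lem1a}, that two vertices of a generalized augmented cube cannot share essentially all of their neighbours --- rather than by a classification of SMP sets. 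Your sketch contains no analogue of this lemma and no worked-out Hall-type argument for the routing, so as written the lower bound is not established.
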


We remark that Theorem \ref{th1-3} was not explicitly stated by \cite{chang2015strong} but it is implied by Theorem 3.2 in \cite{chang2015strong} and $smp(AQ_4)=7$. In fact, they also classified the optimal strong matching
preclusion sets for a subclass of these generalized augmented cubes.

There is a result for fractional perfect matchings that is analogous to Tutte's Theorem for perfect matchings.

\begin{theorem}\emph{(\cite{tutte1947factorization})}\label{po}
A graph $G$ has a perfect matching if and only if $o(G-S)\leq |S|$ for every set $S\subseteq V(G)$,
where $o(G-S)$ is the number of odd components of $G-S$.
\end{theorem}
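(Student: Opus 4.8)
The plan is to treat the two implications separately; the forward implication is immediate, while the converse carries the whole weight of the argument. For necessity, suppose $G$ has a perfect matching $M$ and fix $S\subseteq V(G)$. Every odd component $C$ of $G-S$ has an odd number of vertices, so $M$ cannot saturate $C$ using only edges internal to $C$; hence at least one vertex of $C$ is matched by $M$ to a vertex of $S$. Different odd components use different vertices of $S$, since no edge joins two distinct components of $G-S$, and therefore $o(G-S)\leq|S|$.

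For sufficiency I would argue by contradiction via an edge-maximal counterexample. The first observation is that the Tutte condition is preserved under adding edges: inserting an edge can only merge two components of $G-S$, and merging never increases the number of odd components, so $o(G-S)\leq|S|$ survives for every $S$. Taking $S=\emptyset$ shows $|V(G)|$ is even, and the complete graph on an even vertex set has a perfect matching; hence among all graphs on $V(G)$ that satisfy the Tutte condition yet have no perfect matching there is an edge-maximal one, call it $G^{*}$, and $G^{*}$ is not complete. Let $U$ be the set of vertices of $G^{*}$ adjacent to every other vertex.

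The crux is the structural claim that every component of $G^{*}-U$ is a complete graph. I would prove it by contradiction: if some component is not complete, it contains vertices $a,b,c$ with $ab,bc\in E(G^{*})$ but $ac\notin E(G^{*})$, and since $b\notin U$ there is a vertex $d$ with $bd\notin E(G^{*})$. By maximality both $G^{*}+ac$ and $G^{*}+bd$ have perfect matchings $M_{1}\ni ac$ and $M_{2}\ni bd$, and $H=M_{1}\triangle M_{2}$ is a disjoint union of even cycles alternating between the two matchings, with $ac$ and $bd$ each lying on such a cycle. If they lie on different cycles, flipping the cycle through $bd$ to its $M_{1}$-edges while keeping $M_{2}$ elsewhere deletes $bd$ without reintroducing $ac$ and yields a perfect matching inside $G^{*}$; if they lie on the same cycle $Z$, deleting $ac$ and $bd$ splits $Z$ into two odd paths whose ends I can repair using the edge $ba\in E(G^{*})$, again producing a perfect matching of $G^{*}$. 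Either way this contradicts the choice of $G^{*}$, proving the claim.

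Granting the claim, I would finish by taking $S=U$. The components of $G^{*}-U$ are complete, and the Tutte inequality $o(G^{*}-U)\leq|U|$ lets me send one vertex of each odd component to a distinct neighbor in $U$, match the remainder of every component internally, and pair up the leftover vertices of $U$, whose number is even by a global parity count since $|V(G^{*})|$ is even. This builds a perfect matching of $G^{*}$, the final contradiction. I expect the genuine obstacle to be the same-cycle case of the structural claim: one must track how $M_{1}$ and $M_{2}$ interleave along $Z$, verify that the two pieces obtained by removing $ac$ and $bd$ have the right parities, and reroute through $ba$ to obtain a matching that uses no edge outside $G^{*}$. Everything surrounding that step is parity bookkeeping, so the combinatorial heart of the theorem is concentrated there.
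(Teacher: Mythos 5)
The paper does not prove this statement at all: it is Tutte's 1947 theorem, quoted verbatim as background in the ``Related results'' section and used only as a black box (via Theorem \ref{th1-3} and its fractional analogue, Theorem \ref{pi}), so there is no in-paper proof to compare against. Your argument is the classical edge-maximal-counterexample proof (Lov\'asz's proof, the one in Diestel and West), and it is essentially correct: the necessity direction, the preservation of the Tutte condition under edge addition, the structure of $U$ and the final assembly with the parity count $|U|\equiv o(G^*-U)\pmod 2$ are all right. The one place where your sketch is looser than it should be is exactly the spot you flag, the same-cycle case. There, after deleting $ac\in M_1$ and $bd\in M_2$ from $Z$, each of the two resulting paths begins with an $M_2$-edge and ends with an $M_1$-edge (or vice versa), hence has an even number of edges and an odd number of vertices --- so ``odd paths'' should be read as ``odd order,'' not odd length. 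More importantly, which repair edge you need depends on how $Z$ interleaves: if the paths are $a\cdots b$ and $c\cdots d$ you cover the first by its $M_2$-edges (missing only $b$), the second by its $M_1$-edges (missing only $c$), and close with $cb$; if the paths are $a\cdots d$ and $c\cdots b$ the symmetric construction closes with $ab$. Since both $ab$ and $cb$ lie in $E(G^*)$ by the choice of $a,b,c$, the repair always succeeds, but a full write-up must split into these two configurations rather than invoking the single edge $ba$.
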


\begin{theorem}\emph{(\cite{scheinerman2011fractional})}\label{pi}
A graph $G$ has a fractional perfect matching if and only if $i(G-S)\leq |S|$ for every set $S\subseteq V(G)$,
where $i(G-S)$ is the number of isolated vertices of $G-S$.
\end{theorem}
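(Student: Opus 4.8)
The plan is to prove the two implications separately, with the ``if'' direction carrying essentially all of the difficulty. The ``only if'' direction—that a fractional perfect matching forces $i(G-S)\le|S|$ for every $S$—I would dispatch with a short weight-counting argument. For the converse I would not argue combinatorially but instead pass through linear-programming duality: I plan to rephrase ``$G$ has a fractional perfect matching'' as an optimal-value statement about the fractional matching number, dualize it to the fractional vertex cover number, and then use the half-integrality of the fractional vertex cover polytope to read off an explicit violating set $S$. This last extraction is the step I expect to be the main obstacle.

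For necessity, suppose $f$ is a fractional perfect matching and fix $S\subseteq V(G)$. Let $I$ be the set of isolated vertices of $G-S$. Then $I$ is independent in $G$, and every edge incident to a vertex of $I$ has its other end in $S$ (an edge to a vertex outside $S$ would survive in $G-S$). Since $\sum_{e\in\delta'(v)}f(e)=1$ for each $v\in I$, summing over $I$ gives $|I|=\sum_{v\in I}\sum_{e\in\delta'(v)}f(e)$, which is a sum of $f$-values over edges joining $I$ to $S$; bounding it by $\sum_{u\in S}\sum_{e\in\delta'(u)}f(e)\le|S|$ yields $i(G-S)=|I|\le|S|$.

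For sufficiency I would argue the contrapositive: assuming $G$ has no fractional perfect matching, I construct $S$ with $i(G-S)>|S|$. First note that a fractional perfect matching is precisely a fractional matching of total weight $|V(G)|/2$, and this is the maximum possible, so $G$ has a fractional perfect matching if and only if the fractional matching number $\mu_f(G)$ equals $|V(G)|/2$. By LP duality $\mu_f(G)=\tau_f(G)$, the fractional vertex cover number, and since the constant assignment $1/2$ is always a feasible cover we have $\tau_f(G)\le|V(G)|/2$, with equality exactly when a fractional perfect matching exists. Hence, in the absence of one, there is a fractional vertex cover of total weight strictly below $|V(G)|/2$.

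The crux is to turn this fractional cover into a combinatorial witness, and here I would invoke half-integrality of the fractional vertex cover polytope: one may select an optimal cover $z$ with $z_v\in\{0,1/2,1\}$, which I would justify by the standard perturbation argument (shifting the coordinates in $(1/2,1)$ and those in $(0,1/2)$ by $\pm\varepsilon$ exhibits any non-half-integral $z$ as the midpoint of two feasible covers). Writing $A=\{v:z_v=0\}$ and $C=\{v:z_v=1\}$, the constraint $z_u+z_v\ge1$ forces $A$ to be independent with $N_G(A)\subseteq C$, while $\tau_f(G)<|V(G)|/2$ rearranges to $|A|>|C|\ge|N_G(A)|$. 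Taking $S=N_G(A)$, every vertex of $A$ is isolated in $G-S$ and none lies in $S$, so $i(G-S)\ge|A|>|N_G(A)|=|S|$, finishing the contrapositive. Thus the real content is packaged into the half-integrality lemma; once that is in hand the rest is bookkeeping.
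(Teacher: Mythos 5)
Your proposal is correct. One point of context first: the paper does not prove this statement at all --- it is imported verbatim from the cited reference (Scheinerman and Ullman's book on fractional graph theory) as a known tool, so there is no in-paper proof to compare against. Judged on its own merits, your argument is sound and is essentially the standard route to this ``fractional Tutte/K\"onig--Hall'' theorem: the necessity direction is the correct double-counting over the independent set $I$ of isolated vertices (the key observations --- that $I$ is independent and that every edge leaving $I$ lands in $S$ --- are both stated and both needed); the sufficiency direction correctly reduces ``no fractional perfect matching'' to $\tau_f(G)=\mu_f(G)<|V(G)|/2$ via strong LP duality and the feasibility of the all-$\tfrac12$ cover, and then extracts the violating set $S=N_G(A)$ from a half-integral optimal cover. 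The arithmetic $|C|+|B|/2<(|A|+|B|+|C|)/2\Rightarrow|A|>|C|\ge|N_G(A)|$ checks out, as does the claim that $A$ is independent with $N_G(A)\subseteq C$. The only places a careful reader would ask for a little more are (i) the justification that the minimum of the cover LP is attained at a vertex of the polyhedron (it is pointed and the objective is bounded below, so this is fine) and (ii) a complete write-up of the half-integrality/perturbation lemma, which you have correctly identified as the real content and sketched accurately. Neither is a gap, just detail to be filled in.
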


\cite{liu2017fractional} proved the following result.
\begin{theorem}\emph{(\cite{liu2017fractional})}\label{t14}
Let $n\geq 3$. Then $fsmp(K_n)=n-2$.
\end{theorem}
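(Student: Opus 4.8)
The plan is to establish both the upper and lower bounds using the fractional analogue of Tutte's theorem (Theorem~\ref{pi}), which detects a fractional perfect matching through the number of isolated vertices left after deleting a set $S$. A guiding reminder throughout is that one must work with isolated vertices rather than odd components, since (unlike ordinary perfect matchings) fractional perfect matchings may exist on odd graphs, so the relevant obstruction is an excess of isolated vertices.

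For the upper bound $fsmp(K_n)\le n-2$, I would exhibit an explicit FSMP set of size $n-2$. Fix three distinct vertices $a,b,w$, let $T$ be the remaining $n-3$ vertices, and set $F=T\cup\{ab\}$, so that $|F|=(n-3)+1=n-2$. In $K_n-F$ only $a,b,w$ survive, joined by the edges $aw$ and $bw$ (the edge $ab$ having been deleted). Taking $S=\{w\}$ in Theorem~\ref{pi}, the graph $(K_n-F)-S$ consists of the two isolated vertices $a$ and $b$, so the number of isolated vertices is $2>1=|S|$; hence $K_n-F$ has no fractional perfect matching and $F$ is an FSMP set.

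For the lower bound $fsmp(K_n)\ge n-2$, I would show that any vertex/edge set $F$ with $|F|\le n-3$ leaves a fractional perfect matching. Write $F=T\cup E_F$ with $|T|=t$ and $|E_F|=e$, so $t+e\le n-3$, and put $N=n-t=|V(K_n-F)|\ge 3$. Fix an arbitrary $S\subseteq V(K_n-F)$ with $|S|=s$, let $i$ be the number of isolated vertices of $(K_n-F)-S$, and let $r=N-s-i\ge 0$ count the surviving non-isolated vertices. Because in $K_n$ every edge joining an isolated vertex to another surviving vertex must have been deleted, a direct count of the edges incident to the isolated set gives
\[ e\ \ge\ \frac{i(i-1)}{2}+ir. \]
Combining this with $e\le N-3=s+i+r-3$ and rearranging leads to
\[ 2(s-i)\ \ge\ (i-2)(i-3)+2r(i-1). \]
Since $(i-2)(i-3)\ge 0$ for every integer $i$ and $r(i-1)\ge 0$ whenever $i\ge 1$, the right-hand side is nonnegative, forcing $s\ge i$. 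As this holds for every choice of $S$, Theorem~\ref{pi} guarantees a fractional perfect matching in $K_n-F$, contradicting that $F$ is an FSMP set.

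The main obstacle is the lower bound, and within it the counting inequality $e\ge \frac{i(i-1)}{2}+ir$ together with the nonnegativity check; the argument hinges on the convenient factorization $i^2-5i+6=(i-2)(i-3)$, which makes the final comparison $s\ge i$ fall out for all admissible integer values of $i$ and $r$. I would take care to verify the boundary situations ($i=0$, $r=0$, or $s=0$) separately to confirm that the counting bound and the rearrangement remain valid there as well.
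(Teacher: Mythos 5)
The paper states this theorem as a cited result from \cite{liu2017fractional} and supplies no proof of its own, so there is no internal argument to compare against; your proposal stands as a self-contained proof. It is correct: the upper bound reduces $K_n-F$ to the path $a\,$--$\,w\,$--$\,b$, which fails the isolated-vertex criterion of Theorem~\ref{pi} with $S=\{w\}$, and the lower bound's counting inequality $e\ge \binom{i}{2}+ir$ combined with $e\le s+i+r-3$ does rearrange to $2(s-i)\ge (i-2)(i-3)+2r(i-1)$, whose right side is nonnegative for all integers $i\ge 1$ (and the case $i=0$, where the $2r(i-1)$ term could be negative, is vacuous since $s\ge 0=i$ automatically). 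This is the natural route via the fractional Tutte-type characterization (Theorem~\ref{pi}) that the paper itself records for exactly this purpose.
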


\section{Main results}
For convenience, we first present some notations, which will be used throughout this
section. If $G\in {\cal GAQ}_n$ for $n\geq 5$, the two subgraphs of $G$ that belong to $\mathcal{GAQ}_{n-1}$ are denoted by $H_0$ and $H_1$. Given $G\in {\cal GAQ}_n$ and $F\subseteq V(G)\cup E(G)$, we denote the subset of $F$ in $H_0$ and $H_1$ by $F^0$ and $F^1$, respectively, and
let $F_V=F\cap V(G)$, $F_E=F\cap E(G)$, $F^i_V=F\cap V(H_i)$,
and $F^i_E=F\cap E(H_i)$, where $i=0,1$.

Our first goal is to find the fractional strong matching preclusion number of generalized augmented cubes. We first claim that if $n\geq 4$ and $G\in {\cal GAQ}_n$, then $fsmp(G)=2n-1$. We start with the following lemma.

\begin{lemma}
\label{lem1}
Let $G$ be generalized augmented cube. Let $(a,b)$ be an edge of $G$, $A$ be the set of neighbors of $a$ and $B$ be the set of neighbors of $b$. Then $A-\{b\}\neq B-\{a\}$.
\end{lemma}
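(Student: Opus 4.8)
The plan is to argue by induction on $n\ge 4$, after first translating the statement into a more convenient form. Since $(a,b)$ is an edge we have $b\in A$ and $a\in B$, and one checks immediately that the equality $A-\{b\}=B-\{a\}$ is equivalent to the equality of closed neighborhoods $N[a]=N[b]$; moreover, every vertex of $A-\{b\}$ is then a common neighbor of $a$ and $b$, so (as $G$ is $(2n-1)$-regular) this equality forces $|A\cap B|=2n-2$, whereas for any adjacent pair $|A\cap B|\le 2n-2$ always holds. I would record this reformulation as the first step: it suffices to produce, for each edge $(a,b)$, a vertex adjacent to exactly one of $a,b$, i.e.\ to show that adjacent vertices have strictly fewer than $2n-2$ common neighbors. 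This makes both the base case and the inductive step concrete.

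For the inductive step, suppose $n\ge 5$ and write $G$ as two copies $H_0,H_1\in\mathcal{GAQ}_{n-1}$ joined by the edge-disjoint perfect matchings $M_1,M_2$. I would split on the location of the edge $(a,b)$. If $a$ and $b$ lie in the same copy, say $H_0$, then $(a,b)\in E(H_0)$, and restricting the two candidate sets to $V(H_0)$ gives exactly $N_{H_0}(a)-\{b\}$ and $N_{H_0}(b)-\{a\}$, since the only neighbors of $a$ and of $b$ outside $H_0$ are their matching partners in $H_1$. These restrictions are unequal by the induction hypothesis applied to $H_0$, so the full sets are unequal. If instead $(a,b)$ is a matching edge with $a\in H_0$ and $b\in H_1$, I would compare the two sets inside $V(H_0)$: here $(A-\{b\})\cap V(H_0)=N_{H_0}(a)$ has $2(n-1)-1=2n-3\ge 7$ elements, while $(B-\{a\})\cap V(H_0)$ consists only of the remaining matching partner of $b$ in $H_0$ (its partner via the matching not containing $(a,b)$), a single vertex, which is distinct from $a$ because $M_1,M_2$ are edge-disjoint. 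The two restrictions thus have different sizes, and the sets are unequal. This disposes of the inductive step with essentially no computation.

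The base case $n=4$, i.e.\ $G=AQ_4$, is where the real work lies, and it cannot be pushed lower: for $AQ_3$ the edge joining $000$ and $011$ already satisfies $A-\{b\}=B-\{a\}$, and in $AQ_2\cong K_4$ the statement fails for every edge, so the claim genuinely needs $n\ge 4$. For $AQ_4$ I would exploit vertex-transitivity to assume $a=0000$, so that every edge is equivalent under an automorphism to one of the seven edges joining $0000$ to a neighbor; using the non-recursive description of adjacency (complementing a nonempty suffix), I would list $N(0000)$ and $N(b)$ for each of the seven neighbors $b$ and verify that $|N(0000)\cap N(b)|\le 4<6=2n-2$ in every case. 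This finite verification is the main obstacle, and once it is in place the induction yields the lemma for all $n\ge 4$.
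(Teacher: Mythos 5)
Your proposal is correct, and it follows the same overall skeleton as the paper's argument---induction on the recursive construction of $\mathcal{GAQ}_n$ with $AQ_4$ as the base case---but the two proofs diverge in the details, and yours is in places the more careful one. Your inductive step (restrict both candidate sets to $V(H_0)$ and invoke the hypothesis when $(a,b)$ lies inside one copy; compare cardinalities $2n-3$ versus $1$ when $(a,b)$ is a matching edge, using edge-disjointness of $M_1,M_2$) is spelled out explicitly, whereas the paper compresses this into a one-sentence appeal to the recursive definition. For the base case the paper instead decomposes $AQ_4$ into two copies of $AQ_3$ and, for a within-copy edge, argues only that the cross-neighbours satisfy $a^x\neq b^x$; as written this does not exclude the possibility $\{a^x,a^c\}=\{b^x,b^c\}$, which actually occurs when $b$ is the within-copy complement of $a$, so the paper's base case has a small gap that your direct finite check avoids. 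Your reduction via vertex-transitivity to the seven edges at $0000$ is sound, and the verification you defer does go through: $|N(0000)\cap N(b)|$ equals $2$ for $b\in\{1000,0100,0010,0001,1111\}$ and $4$ for $b\in\{0111,0011\}$, so it is always strictly less than $2n-2=6$; it would strengthen the write-up to include this short table rather than merely describe it. Finally, your observation that the statement fails in $AQ_3$ (e.g.\ $a=000$, $b=011$) and in $AQ_2\cong K_4$ is a worthwhile addition the paper does not make, since it shows the base case cannot be pushed below $n=4$.
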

\begin{proof}
 We first show the claim is true for $AQ_4$. If $(a,b)$ is an edge in some one copy of $AQ_4$, it is obvious that $a^x\neq b^x$. Thus, $A-\{b\}\neq B-\{a\}$ for $AQ_4$. Next, we consider that $(a,b)$ is a cross edge or a complement edge of $AQ_4$. Without less generality, we assume $a\in V(AQ^{0}_3)$ and $b\in V(AQ^{1}_3)$. Since $AQ^{i}_3$ is $5$-regular, where $i=0,1$, it follows that $a$ and $b$ have five neighbors in $AQ^0_3$ and $AQ^1_3$, respectively. By definition, we know that $a$ has only one neighbor except $b$ in $AQ^1_3$. Similarly, $b$ has only one neighbor except $a$ in $AQ^0_3$. Thus, $A-\{b\}\neq B-\{a\}$ for $AQ_4$.
 Therefore, the claim is true by the recursive definition of generalized augmented cubes.
\end{proof}

\begin{theorem}\label{Th3.2}
Let $n\geq 5$. If every graph in ${\cal GAQ}_{n-1}$ has fractional strong matching preclusion number $2n-3$, that is, every graph in ${\cal GAQ}_{n-1}$ is fractional strongly maximally matched, then every graph in ${\cal GAQ}_{n}$ has fractional strong matching preclusion number $2n-1$, that is, every graph in ${\cal GAQ}_{n}$ is fractional strongly maximally matched.
\end{theorem}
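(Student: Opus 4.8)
The plan is to establish the upper and lower bounds on $fsmp(G)$ for $G\in\mathcal{GAQ}_n$ separately, with the lower bound carried by induction from the hypothesis that every member of $\mathcal{GAQ}_{n-1}$ is fractional strongly maximally matched. The upper bound is immediate: since $G$ is $(2n-1)$-regular, deleting the $2n-1$ edges incident to any fixed vertex isolates it, and a graph with an isolated vertex has no fractional perfect matching (take $S=\emptyset$ in Theorem~\ref{pi}); hence $fsmp(G)\le\delta(G)=2n-1$. It therefore remains to prove $fsmp(G)\ge 2n-1$, that is, that for every $F\subseteq V(G)\cup E(G)$ with $|F|\le 2n-2$ the graph $G-F$ still admits a fractional perfect matching.

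Fix such an $F$ and realize $G$ as two copies $H_0,H_1\in\mathcal{GAQ}_{n-1}$ joined by the two perfect matchings $M_1,M_2$. Split $F=F^0\cup F^1\cup F_M$, where $F^i=F^i_V\cup F^i_E$ lies inside $H_i$ and $F_M=F\cap(M_1\cup M_2)$ is the set of deleted cross edges; thus $|F^0|+|F^1|+|F_M|\le 2n-2$, and we may assume $|F^0|\ge|F^1|$. To certify a fractional perfect matching of $G-F$ I would use Theorem~\ref{pi}, verifying $i\big((G-F)-S\big)\le|S|$ for every $S\subseteq V(G-F)$. The recurring tool is a degree count: an isolated vertex of $(G-F)-S$ has each of its $2n-1$ incident edges destroyed, either because a neighbour lies in $F_V\cup S$ or because the edge itself lies in $F_E$; summing over the isolated vertices and using $(2n-1)$-regularity bounds their number in terms of $|F_V|$, $|F_E|$ and $|S|$.

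The first, balanced case is $|F^0|\le 2n-4$. Then $|F^1|\le 2n-4$ as well, so by the induction hypothesis $\big(fsmp(H_i)=2n-3\big)$ each $H_i-F^i$ has a fractional perfect matching $f_i$. Extending $f_0$ and $f_1$ by zero on the surviving cross edges yields a function whose weighted degree at every remaining vertex equals $1$, i.e. a fractional perfect matching of $G-F$, and this case is complete.

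The remaining concentrated case, $|F^0|\ge 2n-3$, is the crux and the main obstacle. Here $|F^1|+|F_M|\le(2n-2)-(2n-3)=1$, so at most one deletion touches $H_1$ or the cross matchings; in particular $|F^1|\le 1\le 2n-4$ and $H_1-F^1$ still has a fractional perfect matching, whereas $H_0-F^0$ may have none because $|F^0|$ can reach $fsmp(H_0)$. The plan is to rescue the few uncoverable vertices of $H_0$ through the two cross matchings. If $H_0-F^0$ has a fractional perfect matching we glue as before; otherwise Theorem~\ref{pi} provides $S_0$ with $i\big((H_0-F^0)-S_0\big)\ge|S_0|+1$, and the degree count inside the $(2n-3)$-regular $H_0$ bounds the excess isolated vertices. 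Since $H_1$ and $M_1,M_2$ are essentially intact, each such vertex retains an available cross edge to a vertex of $H_1$; I would route the excess vertices of $H_0$ across these edges and then locally repair the fractional perfect matching of $H_1$ to absorb the at most one extra constraint coming from $F^1\cup F_M$. The delicate point is to guarantee that two troublesome vertices do not compete for the same cross target and that no degenerate twin configuration obstructs the rerouting; this is precisely what Lemma~\ref{lem1} excludes, as it forbids an edge $(a,b)$ with $N(a)\setminus\{b\}=N(b)\setminus\{a\}$. Confirming that the rerouting together with the local repair always yields a valid fractional perfect matching—equivalently, checking the inequality of Theorem~\ref{pi} for every $S$ in this concentrated regime—is the technical heart of the argument and the step I expect to demand the most care.
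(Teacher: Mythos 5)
Your upper bound and your balanced case ($|F^0|\le 2n-4$: glue the two fractional perfect matchings guaranteed by the hypothesis) are correct and match the paper. But the concentrated case $|F^0|\ge 2n-3$ is where essentially all of the work of this theorem lives, and you have not proved it — you say so yourself ("the technical heart \dots the step I expect to demand the most care"). The sketch you offer in its place does not obviously close: extracting a deficiency set $S_0$ with $i\bigl((H_0-F^0)-S_0\bigr)\ge |S_0|+1$ and "routing the excess vertices across" is not a construction. If you send weight $1$ along a cross edge $(v,v^a)$ you must then fractionally perfectly match $H_1-F^1$ minus the targets \emph{and} fractionally perfectly match whatever is left of $H_0-F^0$ after removing the routed vertices, and nothing in your argument guarantees the latter; knowing that $H_0-F^0$ violates the condition of Theorem~\ref{pi} for one set $S_0$ gives you no handle on the structure of the rest of $H_0-F^0$. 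Likewise, "locally repairing" a fractional perfect matching of $H_1$ is not a defined operation.

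The paper avoids this entirely by two devices you did not use. First, if $|F_V|$ is even, then $G-F$ is an even graph with $|F|\le 2n-2<2n-1=smp(G)$ (Theorem~\ref{th1-3}), so $G-F$ has an honest perfect matching and there is nothing to do; this reduces everything to $|F_V|$ odd. Second, in the concentrated cases the paper never analyzes $H_0-F^0$ directly: it deletes one or two carefully chosen elements of $F^0$ to form a set $F^{00}$ with $|F^{00}|\le 2n-4$, invokes Theorem~\ref{th1-3} to get a genuine perfect matching $M$ of $H_0-F^{00}$, and then reroutes the (at most three) edges of $M$ that meet the reinstated faults across the cross matchings, finishing with the induction hypothesis applied to $H_1-F^1$ minus at most three targets (legitimate since $3\le 2n-4$). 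The only configuration this fails on is $F=\{(u,z)\}$ plus $2n-4$ vertices all adjacent to both $u$ and $z$ with $u^a=z^b$, $u^b=z^a$, which forces $N(u)\setminus\{z\}=N(z)\setminus\{u\}$ and is excluded by Lemma~\ref{lem1} — you correctly identified that this lemma is the tool for the degenerate twin configuration, but you never reach the point of isolating that configuration. As it stands, the proposal establishes the easy half and a correct outline of where the difficulty sits, but not the theorem.
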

\begin{proof}
Let $G\in {\cal GAQ}_{n}$. Then $fsmp(G)\leq \delta(G)=2n-1$. Let $F\subseteq V(G)\cup E(G)$ where $|F|\leq 2n-2$.
By definition, $G$ is constructed by using $H_0$ and $H_1$ in ${\cal GAQ}_{n-1}$ together with two edge disjoint
perfect matchings between $V(H_0)$ and $V(H_1)$. Let $v\in V(H_0)$. We denote the edge incident to $v$ from the first set by $(v,v^a)$ and the one from the second set by $(v,v^b)$. Although we do not explicitly define the set of edges in $F$ that are between $H_0$ and $H_1$, the proof will consider these edges.

We want to prove that $G-F$ has a
fractional perfect matching. If $|F_V|$ is even, then $G-F$ has a perfect matching by
Theorem \ref{th1-3}. So we only consider the case that $|F_V|$ is odd. We may assume that $|F^0|\geq |F^1|$.

{\bf Case 1.} $|F^0|=2n-2$. Then $F=F^0$. Since $|F_V|$ is odd, $|F^0_V|\geq 1$. Let $v\in F^0_V$. Since $2n-2$ is even and
$|F_V|=|F^0_V|$ is odd, $F^0$ contains an edge $(w,s)$. Let $F^{00}=F^0-\{v, (w,s)\}$. So $|F^{00}|=2n-4$.
Since $H_0-F^{00}$ has an even number of vertices, there exists a perfect matching $M$ by Theorem \ref{th1-3}.
We first assume that $(w,s)\in M$. Now $(v,y)\in M$ for some $y$. Clearly $y\not\in\{w,s\}$. Now
$H_1-\{y^a, w^a, s^a\}$ has
a fractional matching $f_1$ by assumption as $2n-3>3$ for $n\geq 4$. Let $M'=M-\{(y,v),(w,s)\}$. Then it is clear that $M'\cup\{(y,y^a),(w,w^a),(s,s^a)\}$ and $f_1$ induce a fractional
prefect matching of $G-F$. The argument for the case when $(w,s)\not\in M$ is easier. Consider $(v,y)\in M$ for some $y$.  Now
$H_1-\{y^a\}$ has
a fractional matching $f_1$ by assumption as $2n-3>1$. Let $M'=M-\{(y,v))\}$. Then it is clear that $M'\cup\{(y,y^a)\}$ and $f_1$ induce a fractional
prefect matching of $G-F$.

{\bf Case 2.} $|F^0|=2n-3$. Then $|F^1|\leq 1$. We consider two subcases.

{\em Subcase 2.1.} $F^0$ contains an odd number of vertices. Then let $v\in F^0_V$. Let $F^{00}=F^0-\{v\}$. So $|F^{00}|=2n-4$. Since $H_0-F^{00}$ has an even number of vertices, there exists a perfect matching $M$ by Theorem \ref{th1-3}.
Now $(v,y)\in M$ for some $y$.
Since $|F-F^0|\leq 1$, at least one of $(y,y^a)$ and $(y,y^b)$ is in $G-F$. We may assume that it is $(y,y^a)$.
$H_1-F^1-\{y^a\}$ has
a fractional matching $f_1$ by assumption as $2n-3>2$. Let $M'=M-\{v\}$. Then it is clear that $M'\cup\{(y,y^a)\}$ and $f_1$ induce a fractional
prefect matching of $G-F$.

{\em Subcase 2.2.} $F^0$ contains an even number of vertices. If $F^0$ contains an edge $(u,z)$ then we set $F^{00}=F^0-\{(u,z)\}$. So $|F^{00}|=2n-4$. Since $H_0-F^{00}$ has an even number of vertices, there exists a perfect matching $M$ by Theorem \ref{th1-3}. If $(u,z)\not\in M$, then apply assumption to obtain a fractional perfect matching $f_1$ for $H_1-F^1$. Then it is clear that $M$ and $f_1$ induce a fractional
prefect matching of $G-F$. Now suppose $(u,z)\in M$. Then consider the edges $(u,u^a),(z,z^a),(u,u^b),(z,z^b)$. If they contain two independent edges that are in $G-F$, then we can apply the usual argument to obtain a desired fractional
prefect matching of $G-F$. So assume that we cannot find two independent edges from them. Since $|F-F^0|\leq 1$, we can conclude that $u^a=z^b$, $u^b=z^a$ and one of $u^a$ and $u^b$
is in $F$. But this can only occur for one such pair.

Thus we simply consider a different edge in $F^0$ unless all the remaining elements of $F$ are vertices. Suppose that it contains a vertex $w$ that is not adjacent to both $u$ and $z$.
Then pick another vertex $s$ in $F$. Let $F^{00}=F^0-\{w,s\}$ if $w$ is adjacent to neither $u$ nor $z$. If $w$ is adjacent to one of them, say $u$, then let $F^{00}=(F^0-\{w,s\})\cup\{(w,u)\}$. Thus $|F^{00}|\leq 2n-3-2+1=2n-4$.
Since $H_0-F^{00}$ has an even number of vertices, there exists a perfect matching $M$ by Theorem \ref{th1-3}. Consider $(w,y),(s,v)\in M$. By choice of $w$ and construction of $F^{00}$, $y\notin\{u,z\}$. Therefore $(y,y^a),(v,v^a),(y,y^b),(v,v^b)$ contain two independent edges that are in $G-F$ as $\{y,v\}\neq\{u,z\}$.

Thus we have identified $F$. $F$ consists of $(u,z)$ together with $2n-4$ vertices, each is adjacent to both $u$ and $z$. This is a contradiction by Lemma \ref{lem1}.

{\bf Case 3.} $|F^0|\leq 2n-4$.
Then $|F^1|\leq 2n-4$. By assumption, $H_0-F^0$ and $H_1-F^1$ have fractional perfect matchings $f_0$ and $f_1$, respectively, which induce a fractional
prefect matching of $G-F$.
\end{proof}

Thus it follows from Theorem \ref{Th3.2} that if we can show that $AQ_4$ is fractional strongly maximally matched, then every generalized augmented cube is fractional strongly maximally matched.
We now turn our attention to the classification of optimal fractional strong matching preclusion sets of graphs in ${\cal GAQ}_n$. We start with the following lemma.

\begin{lemma}
\label{lem1a}
Let $G$ be a generalized augmented cube.
\begin{itemize}
\item Let $(a,b)$ be an edge of $G$, $A$ be the set of neighbors of $a$ and $B$ be the set of neighbors of $b$. Then $|(A-\{b\})\setminus (B-\{a\})|\geq 2$.
\item Let $a$ and $b$ be nonadjacent vertices of $G$, $A$ be the set of neighbors of $a$ and $B$ be the set of neighbors of $b$. Then $|A \setminus B|\geq 2$.
\end{itemize}
\end{lemma}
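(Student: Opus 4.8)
The plan is to restate both inequalities as upper bounds on the number of \emph{common} neighbors $A\cap B$ and then prove those bounds by induction on $n$, reusing the two-case split from the proof of Lemma~\ref{lem1}. Since $G$ is $(2n-1)$-regular and simple, for an edge $(a,b)$ we have $(A-\{b\})\cap(B-\{a\})=A\cap B$ (neither $a$ nor $b$ lies in $A\cap B$), so $|(A-\{b\})\setminus(B-\{a\})|=(2n-2)-|A\cap B|$; thus the first item is equivalent to the statement that two adjacent vertices have at most $2n-4$ common neighbors. Likewise, for nonadjacent $a,b$ we have $|A\setminus B|=(2n-1)-|A\cap B|$, so the second item is equivalent to the statement that two nonadjacent vertices have at most $2n-3$ common neighbors. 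I would prove these two bounds together.

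For the inductive step take $n\geq 5$ and $G\in{\cal GAQ}_n$ assembled from $H_0,H_1\in{\cal GAQ}_{n-1}$ by two perfect matchings, and split exactly as in Lemma~\ref{lem1}. If $a,b$ lie in one copy, say $H_0$, then no edges are added inside $H_0$, so the adjacency of $a,b$ is the same in $G$ as in $H_0$; the induction hypothesis bounds their common neighbors inside $H_0$ by $2(n-1)-4$ when adjacent and by $2(n-1)-3$ when nonadjacent, while the only possible common neighbors in $H_1$ are the at most two shared matching partners, contributing at most $2$. The two sums are exactly $2n-4$ and $2n-3$. If $a\in V(H_0)$ and $b\in V(H_1)$, then each of $a,b$ has exactly two neighbors in the opposite copy, so at most two common neighbors lie in $H_0$ and at most two in $H_1$, for at most $4$ in total; this already gives $4\le 2n-3$ for the nonadjacent case, and when $(a,b)$ is a matching edge each copy contributes at most one common neighbor, giving at most $2\le 2n-4$. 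These are the routine degree counts already rehearsed for Lemma~\ref{lem1}.

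The \textbf{main obstacle} is the base case $G=AQ_4$, which must be checked directly and cannot be reduced one level further: its two copies are $AQ_3$, and $AQ_3$ already fails the bound, since the adjacent vertices $000$ and $011$ have the four common neighbors $\{001,010,100,111\}$ in $AQ_3$, whereas the bound for a $5$-regular graph would demand at most $2$. Hence for $AQ_4$ I cannot invoke any inductive estimate on $AQ_3$ but must use its explicit adjacency. I would use vertex-transitivity of $AQ_4$ to fix $a=0000$ and inspect the fifteen choices of $b$, grouping them as an edge inside a copy, a cross edge, a complement edge, and the nonadjacent analogues, and in each group add the common neighbors coming from within a copy to those coming from the cross and complement matchings, checking that the totals never exceed $4$ for adjacent pairs or $5$ for nonadjacent pairs. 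The one delicate point is the within-copy edges such as $(0000,0011)$ that already carry the maximal four common neighbors inside $AQ^0_3$: here I must verify that the cross and complement partners of $0000$, namely $\{1000,1111\}$, are disjoint from those of $0011$, namely $\{1011,1100\}$, so that the matchings add nothing and the count stays at $4$. Confirming this disjointness for every such extremal pair from the explicit cross/complement rule completes the base case, and with the induction above it yields the lemma.
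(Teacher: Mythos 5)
Your proof is correct, and while it shares the paper's overall skeleton (induction along the recursive construction with $AQ_4$ as the base case), the execution is genuinely different and, frankly, tighter. Your key move is to restate both items as upper bounds on $|A\cap B|$ (at most $2n-4$ for adjacent pairs, $2n-3$ for nonadjacent ones, using that $a\notin A$ and $b\notin B$); this turns the inductive step into a clean count -- at most $2(n-1)-4$ or $2(n-1)-3$ common neighbors inside a copy by hypothesis, plus at most $2$ shared matching partners across, and at most $2+2$ in the cross-copy case -- whereas the paper disposes of the inductive step with a one-line appeal to ``the recursive definition.'' For the base case the paper attempts a structural descent into the $AQ_3$ and $AQ_2$ layers that is loose in places (for instance, it cites $a^x\neq b^x$ to produce a private neighbor of $a$ in the other copy, but $a^x$ can coincide with $b^c$, as for the pair $0000,0111$), and it ultimately leans on a computer check; you instead propose a direct finite verification over the fifteen choices of $b$ after fixing $a=0000$ by vertex-transitivity, and your observation that $AQ_3$ already violates the analogous bound (the adjacent pair $000,011$ has four common neighbors) is a genuine insight that explains why no further descent is possible and why the base case must be checked on $AQ_4$ itself. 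The only thing left implicit in your write-up is the full enumeration of the fifteen cases: you verify the extremal within-copy pair such as $(0000,0011)$, but note that pairs like $(0000,0111)$ are also tight for a different reason (only two common neighbors inside the copy, but both cross and complement partners coincide, contributing two more); since every case does satisfy the bound, this is a routine finite check, not a gap.
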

\begin{proof}
 We first show the claim is true for $AQ_4$. Consider any two distinct vertices $a$ and $b$ of $AQ_4$. If $a$ and $b$ are in different copy of $AQ_4$, without less generality, we assume $a\in V(AQ^{0}_3)$ and $b\in V(AQ^{1}_3)$. Since $AQ^{i}_3$ is $5$-regular, where $i=0,1$, it follows that $a$ and $b$ have five neighbors in $AQ^0_3$ and $AQ^1_3$, respectively. It implies that there exist at least three neighbors of $a$ in $AQ^0_3$ such that they are not adjacent to $b$. Similarly, there exist at least three neighbors of $b$ in $AQ^1_3$ such that they are not adjacent to $a$. Thus, $|(A-\{b\})\setminus (B-\{a\})|\geq 2$ or $|A \setminus B|\geq 2$. Next, we consider that $a$ and $b$ are in some copy of $AQ_4$. Without less generality, we assume $a$ and $b$ are in $AQ^0_3$. It is clear that $a^x\neq b^x$. If we can find at least a pair of distinct neighbors of $a$ and $b$ in $AQ^0_3$, the claim is true. If $a$ and $b$ are in different copy of $AQ_2$, we can find a pair of distinct neighbors of $a$ and $b$ in different copy of $AQ_2$ as the copy of $AQ_2$ is $3$-regular. If $a$ and $b$ are in same one copy of $AQ_2$, then the neighbors $a$ and $b$ in cross edges are distinct. Therefore, the claim is true by the recursive definition of generalized augmented cubes. We note that one can also verify the statement for $AQ_4$
easily via a computer, and we have performed this verification.
\end{proof}
We note that Lemma \ref{lem1a} implies the following: If $G\in {\cal GAQ}_{n}$ (where $n\geq 4$), then $G$ does not contain a $K_{2,2n}$ as a subgraph. This remark will be useful later. Then we have the following result.

We first note the following result.

\begin{theorem}{\upshape \cite{cheng2013strong}}\label{th1-3aa}
Let $n\geq 4$. Then $AQ_n$ is strongly super matched.
\end{theorem}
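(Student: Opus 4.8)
The plan is to prove the statement by induction on $n$, using the recursive decomposition of $AQ_n$ exactly as in the proof of Theorem~\ref{Th3.2}, with the base case $n=4$ settled by a finite (hand or computer) verification of the kind the authors already use for Lemma~\ref{lem1a}. Since the value $smp(AQ_n)=2n-1=\delta(AQ_n)$ is already delivered by Theorem~\ref{th1-2a}, the entire content of ``strongly super matched'' is the classification of optimal SMP sets. Here the crucial bookkeeping observation is that isolating a vertex $v$ of degree $2n-1$ already consumes $2n-1$ elements of $F$: for each of the $2n-1$ edges incident to $v$ one must delete either that edge or its other endpoint. Consequently, once $AQ_n-F$ is shown to have an isolated vertex, the equality $|F|=2n-1$ forces $F$ to be precisely a minimal isolating set of some $v$, with nothing to spare. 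It therefore suffices to prove: (I) every optimal SMP set $F$ leaves an isolated vertex in $AQ_n-F$; and (II) any such $F$ contains an even number of vertices, since otherwise $AQ_n-F$ has odd order with $v$ as its unique isolated vertex, and a perfect matching of $(AQ_n-F)-v$ would give the forbidden almost-perfect matching.

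For the inductive step I would write $AQ_n=H_0\cup H_1$ with $H_0,H_1\cong AQ_{n-1}$ joined by the two connecting perfect matchings, and split $F=F^0\cup F^1\cup F_c$ in the paper's notation, assuming $|F^0|\ge|F^1|$. When both copies are lightly damaged, roughly $|F^0|\le 2n-4<smp(AQ_{n-1})$, Theorem~\ref{th1-2a} gives a perfect or almost-perfect matching inside each of $H_0-F^0$ and $H_1-F^1$, and these are spliced together along the unused cross and complement edges to produce a matching of $AQ_n-F$; this contradicts $F$ being an SMP set, so $F$ must already have isolated a vertex. The delicate cases are the concentrated ones, $|F^0|\in\{2n-3,2n-2\}$, where $F^0$ meets or exceeds the threshold $smp(AQ_{n-1})=2n-3$ and $H_0-F^0$ may have no matching of its own. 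This is exactly where the full strength of the inductive hypothesis, that $AQ_{n-1}$ is strongly \emph{super} matched and not merely maximally matched, is used: after trimming $F^0$ down to $2n-3$ elements, the classification in $H_0$ tells us its only obstruction is an isolated vertex $w$, and since at most two elements of $F$ lie outside $H_0$ we can try to rescue $w$ through one of its two connecting edges $(w,w^a),(w,w^b)$ into the nearly intact copy $H_1$. Either such a rescue succeeds and yields a matching of $AQ_n-F$, or both connecting edges at $w$ lie in $F$, in which case $w$ is isolated in all of $AQ_n-F$ and $F$ is already basic.

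The main obstacle is the remaining blocking pattern in the concentrated regime, the very one singled out in the proof of Theorem~\ref{Th3.2}: $F$ collapsing to a single edge $(u,z)$ together with a set of vertices each adjacent to both $u$ and $z$, so that the endpoints of $(u,z)$ have no free neighbor to be matched outward. Lemma~\ref{lem1a} kills this configuration, because the two endpoints of any edge of a generalized augmented cube always retain at least two private neighbors; hence no blocking set of size $2n-1$ of this shape exists, and $AQ_n-F$ must contain an isolated vertex, establishing (I). Claim (II) then follows from the matching robustness of $AQ_n$: with $|F_V|$ assumed odd, deleting $v$ together with the neighbors named by $F$ removes an even number of vertices, and for all but the extreme deletion this count stays below $smp(AQ_n)$, so Theorem~\ref{th1-2a} supplies a perfect matching of $(AQ_n-F)-v$ and the desired contradiction; the single extreme case, in which $F$ is the whole neighborhood $N(v)$, is disposed of by exhibiting a perfect matching of $AQ_n-N[v]$, again propagated from the base case through the recursive decomposition. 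This proves that every optimal SMP set is trivial, so $AQ_n$ is strongly super matched.
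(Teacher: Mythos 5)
First, a framing point: the paper does not prove this statement at all --- Theorem~\ref{th1-3aa} is imported verbatim from \cite{cheng2013strong}, and the paper only ever uses the weakening recorded as Corollary~\ref{th1-3a}. So you are reconstructing an external proof rather than matching one in the text. Your skeleton is sound: the counting observation that isolating a degree-$(2n-1)$ vertex already costs $2n-1$ elements, so that a basic optimal SMP set must be a minimal isolating set with nothing to spare, and the reduction of ``trivial'' to your claims (I) and (II), are correct, and the recursive $H_0,H_1$ decomposition is the right vehicle.

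The genuine gaps are in the inductive step for (I). (a) In the ``lightly damaged'' regime your splicing fails when $|F^0_V|$ and $|F^1_V|$ are both odd: you then hold two almost-perfect matchings missing vertices $w_0\in H_0$ and $w_1\in H_1$ that need not be adjacent, while $AQ_n-F$ has even order and needs a genuine perfect matching. The standard repair (send $w_0$ across a surviving edge $(w_0,w_0^a)$, re-match $H_1-(F^1\cup\{w_0^a\})$, verify the fault count stays below $2n-3$, and handle the case that both $(w_0,w_0^a)$ and $(w_0,w_0^b)$ are blocked) has to be written out; integer matchings do not enjoy the parity slack that the fractional arguments of Theorems~\ref{Th3.2} and~\ref{Th3.3} exploit via Theorem~\ref{pi}. (b) In the concentrated regime your dichotomy ``either the rescue of $w$ succeeds or both connecting edges at $w$ lie in $F$'' is unjustified: the rescue can also fail because, after committing $(w,w^a)$, the leftover of $H_0$ (whose order and fault count have changed) or of $H_1$ has no suitable matching. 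The paper's own analogues (Subcase~2.2 of Theorem~\ref{Th3.2}, Subcases~1.1 and~2.1 of Theorem~\ref{Th3.3}) resolve exactly this by re-trimming the fault set with a \emph{different} element and playing the two candidate isolated vertices against Lemma~\ref{lem1a}; you need that maneuver, not only the single ``edge $(u,z)$ plus common neighbors'' pattern you rule out. (c) Your explicit case list $|F^0|\in\{2n-3,2n-2\}$ omits $|F^0|=2n-1$, which is a separate case in both cited theorems. (d) The extreme case of (II), a perfect matching of $AQ_n-N[v]$, is asserted rather than proved. None of these is unfixable, but as written the inductive step does not go through.
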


In fact, we will only need a special case of it.

\begin{corollary}\label{th1-3a}
Let $n\geq 4$. Let $F\subseteq V(AQ_n)\cup E(AQ_n)$ be an optimal strong matching preclusion set with an even number of vertices. Then $F$ is trivial.
\end{corollary}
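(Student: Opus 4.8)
The plan is to read the Corollary off directly from Theorem~\ref{th1-3aa}. That theorem asserts that for $n\geq 4$ the graph $AQ_n$ is strongly super matched, which by the definition given in Section~\ref{sec:in} means exactly that \emph{every} optimal strong matching preclusion set of $AQ_n$ is trivial. Since the only standing hypothesis on $F$ is that it is an optimal strong matching preclusion set, the conclusion that $F$ is trivial is immediate. From this point of view the extra assumption that $F$ contains an even number of vertices is already subsumed by triviality and is not logically needed; it is recorded here only because this is the form in which the Corollary will be applied to the fractional problem later.

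If one instead wishes to invoke Theorem~\ref{th1-3aa} only for its structural content, the even-vertex hypothesis becomes genuinely useful and the argument runs as follows. First extract from ``strongly super matched'' the weaker statement that $F$ is \emph{basic}, i.e.\ that $AQ_n-F$ has an isolated vertex $v$. Because $v$ is isolated we have $v\notin F$, and each of the $2n-1$ edges incident to $v$ must be destroyed either by deleting that edge or by deleting its other endpoint. Each element of $F$ can account for at most one such edge, and $|F|=smp(AQ_n)=2n-1$ by Theorem~\ref{th1-2a}; hence $F$ consists precisely of the deleted neighbors of $v$ (these are the $|F_V|$ vertices of $F$) together with the remaining $2n-1-|F_V|$ edges at $v$, with no element of $F$ lying away from $v$. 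Since $AQ_n$ has $2^n$ vertices it is an even graph, and by hypothesis $|F_V|$ is even, so ``$F$ basic,'' ``$AQ_n$ even,'' and ``$|F_V|$ even'' are all in hand, which are exactly the three conditions defining a trivial set.

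The only point requiring care is the justification that a basic optimal set has the canonical one-vertex structure described above: this is where the size identity $|F|=2n-1$ is used, together with the elementary observation that deleting a single vertex or a single edge removes at most one edge incident to $v$, so that $2n-1$ deletions are both necessary and sufficient and leave no room for elements unrelated to $v$. Beyond this bookkeeping there is no real obstacle, since all of the substantive work has already been done in Theorem~\ref{th1-3aa}, and the Corollary merely isolates the even-vertex instance of it in the precise form needed in the sequel.
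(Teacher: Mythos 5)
Your first paragraph is exactly the paper's (implicit) argument: the corollary is presented as nothing more than a special case of Theorem~\ref{th1-3aa}, whose conclusion that every optimal strong matching preclusion set is trivial already subsumes the even-vertex hypothesis under the paper's definition of triviality. Your second, structural derivation (basic $\Rightarrow$ all of $F$ lies at the isolated vertex, then invoke $|F_V|$ even) is also correct but is an unnecessary hedge that the paper does not carry out.
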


We will call a graph $G$ \emph{even strongly super matched} if it is strongly maximally matched and every optimal strong matching preclusion set with an even number of vertices is trivial. So Corollary \ref{th1-3a} says $AQ_n$ is even strongly super matched if $n\geq 4$.
We are now ready to prove the following result.

\begin{theorem}\label{Th3.3}
Let $n\geq 5$. Suppose
\begin{enumerate}
\item every graph in ${\cal GAQ}_{n-1}$ is even strongly super matched, and
\item every graph in ${\cal GAQ}_{n-1}$ is fractional strongly super matched.
\end{enumerate}
Then every graph in ${\cal GAQ}_{n}$ is fractional strongly super matched.
\end{theorem}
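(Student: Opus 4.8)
By Theorem~\ref{Th3.2} the assumption that $\mathcal{GAQ}_{n-1}$ is fractional strongly super matched (hence fractional strongly maximally matched) already gives $fsmp(G)=2n-1$ for every $G\in\mathcal{GAQ}_{n}$, so the only task is to show that every optimal FSMP set isolates a vertex. The plan is to argue by contradiction: fix $F\subseteq V(G)\cup E(G)$ with $|F|=2n-1$ such that $G-F$ has no isolated vertex, and produce a fractional perfect matching of $G-F$. I keep the sectional notation, writing $G$ as two copies $H_0,H_1\in\mathcal{GAQ}_{n-1}$ joined by two edge-disjoint perfect matchings $M_1,M_2$, splitting $F$ into $F^0$, $F^1$, and the set of deleted matching edges, and I assume without loss of generality that $|F^0|\geq|F^1|$.

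If $|F^0|\leq 2n-4$, then $|F^0|,|F^1|<fsmp(H_i)=2n-3$, so each $H_i-F^i$ has a fractional perfect matching by optimality of $fsmp$; their union uses no matching edge and covers every surviving vertex, giving the desired fractional perfect matching. If $|F^0|=2n-3$ (so at most two elements of $F$ lie outside $H_0$), I split further: when $H_0-F^0$ has a fractional perfect matching I pair it with one of $H_1-F^1$ (available since $|F^1|\leq 2<2n-3$); when it does not, $F^0$ is an \emph{optimal} FSMP set of $H_0$, so hypothesis~2 produces an isolated vertex $v$ of $H_0-F^0$. Because $G-F$ has no isolated vertex, a matching edge at $v$ survives, and I would use it to cover $v$, cover $H_0-F^0-v$ by a (near-)perfect matching furnished by Theorem~\ref{th1-3}, and route at most one leftover vertex up to $H_1$.

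The crux is $|F^0|\in\{2n-2,2n-1\}$, where $H_1$ and the two matchings are almost entirely intact but $H_0-F^0$ may be damaged beyond $fsmp(H_0)$. Here I would verify the fractional Tutte condition (Theorem~\ref{pi}) for $G-F$ directly. Suppose a set $S$ violates it, with isolated set $W$, $|W|\geq|S|+1$. Each $w\in W\cap V(H_0)$ still has a surviving matching edge to $H_1$ (at most one matching edge is deleted in these cases), whose endpoint must lie in $S$; since $M_1,M_2$ are matchings these endpoints are distinct, so $|S|\geq|W\cap V(H_0)|$. Each $w\in W\cap V(H_1)$ instead forces its entire $(2n-3)$-element $H_1$-neighborhood into $S$. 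When $W\subseteq V(H_0)$ this already yields $|S|\geq|W|\geq|S|+1$; in general, chasing the equality cases forces two vertices of $W$ to share all but at most one of their neighbors, which is impossible because $G$ is $K_{2,2n}$-free (Lemma~\ref{lem1a}). I expect this $K_{2,2n}$-free step to be the main obstacle: only the exact equality configurations survive the counting, and precisely these are the ones that the neighborhood-separation bound of Lemma~\ref{lem1a}, together with the budget $|F|=2n-1$, is designed to kill.

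Finally, I expect hypothesis~1 to enter in the tight balanced case, when covering the remainder of $H_0$ requires an integer perfect matching after deleting an even set of size $2n-3$: should that matching fail, the deleted set is an optimal strong matching preclusion set of $H_0$ with an even number of vertices, hence trivial by hypothesis~1, and the resulting isolated vertex is exactly what lets me send the last uncovered vertex to $H_1$ without creating a new deficiency. Assembling the cases, $G-F$ always admits a fractional perfect matching, contradicting that $F$ is an FSMP set; therefore every optimal FSMP set of $G$ isolates a vertex, i.e.\ $G$ is fractional strongly super matched.
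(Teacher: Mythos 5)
Your reduction of the statement to ``if $|F|=2n-1$ and $G-F$ has no isolated vertex then $G-F$ has a fractional perfect matching'' is correct, and your treatment of $|F^0|\leq 2n-4$ coincides with the paper's. Already at $|F^0|=2n-3$, though, your sketch is too quick: when $F^0$ is an optimal FSMP set of $H_0$ isolating $v$, you propose to ``cover $H_0-F^0-v$ by a (near-)perfect matching furnished by Theorem~\ref{th1-3},'' but that theorem only guarantees such a matching after deleting at most $smp(H_0)=2n-3$ faults, while $F^0\cup\{v\}$ has $2n-2$ elements. The paper gets around this by exploiting that a trivial $F^0$ is concentrated at $v$ (so one can delete an element of $F^0$, force the edge $(u,v)$ into the resulting perfect matching, and then reroute $u$ and $v$ across to $H_1$); some version of that surgery is needed, and assumption~1 is invoked there and in Case~2 in a precise way, not merely ``in the tight balanced case.''

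The real gap is in your crux cases $|F^0|\in\{2n-2,2n-1\}$, where you abandon the paper's explicit matching construction in favour of verifying the condition $i((G-F)-S)\leq|S|$ of Theorem~\ref{pi}. That is a legitimate alternative strategy in principle, but what you give is a plan rather than a proof, and its one concrete counting claim is wrong as stated: the surviving cross-neighbours of distinct vertices of $W\cap V(H_0)$ need not be distinct, since $M_1$ and $M_2$ are only edge-disjoint and $w_1^{a}=w_2^{b}$ is possible; moreover every surviving neighbour of $w$ (not just one chosen matching edge) must lie in $S$, so the bookkeeping is different from what you describe. More seriously, the assertion that ``chasing the equality cases forces two vertices of $W$ to share all but at most one of their neighbors'' is precisely the step that has to be proved: when $|S|\geq 2$ the deficiency $|W|=|S|+1$ can a priori be spread over several vertices of $W$ and several faults, and nothing you write reduces such a configuration to a single pair violating Lemma~\ref{lem1a} (which bounds only pairwise neighbourhood overlap, i.e.\ rules out $K_{2,2n}$, not larger shared-neighbourhood patterns). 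Until that equality analysis is carried out in full, the hard cases are not established, so the proposal as it stands does not prove the theorem.
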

\begin{proof}
Let $G\in {\cal GAQ}_{n}$. Let $F\subseteq V(G)\cup E(G)$ where $|F|=2n-1$ and $F$ is optimal.
We follow the same notation as in the proof of Theorem \ref{Th3.2}.

We want to prove that $G-F$ either has a
fractional perfect matching or $F$ is trivial. If $|F_V|$ is even, then $G-F$ either has a perfect matching or $F$ is trivial by the assumption that every graph in ${\cal GAQ}_{n-1}$ is even strongly super matched. So we only consider the case that $|F_V|$ is odd. We may assume that $|F^0|\geq |F^1|$.

{\bf Case 1.} $|F^0|=2n-1$. Then $F=F^0$. Since $|F_V|$ is odd, $|F^0_V|\geq 1$. Let $v\in F^0_V$. We consider two subcases.

{\em Subcase 1.1.} $F^0$ contains an edge $(w,s)$. Let $F^{00}=F^0-\{v, (w,s)\}$. So $|F^{00}|=2n-3$.
Since $H_0-F^{00}$ has an even number of vertices, it either has a perfect matching $M$ or $F^{00}$ is trivial by assumption 1. If it has a perfect matching $M$, then the argument of Case 1 in the proof of Theorem~\ref{Th3.2} applies. Thus, we may assume that $F^{00}$ is trivial and that it is induced by a vertex, say,
$\hat{u}$. If $F^{00}$ contains an edge $(w',s')$, replace $(w,s)$ by $(w',s')$ to obtain $F^{000}$, and repeat the argument. If $F^{00}$ contains vertices only, replace $v$ by one of them to obtain $F^{000}$, and repeat the argument. We only have to consider the case that $F^{000}$ is trivial and that it is induced by a vertex, say, $u'$. Since $|F^{00}\setminus F^{000}|=1$, this violates Lemma \ref{lem1a}.

{\em Subcase 1.2.} $F^0$ contains all vertices. Since $2n-1\geq 3$, pick two additional vertices $u$ and $z$ in $F^0$. Let $F^{00}=F^0-\{v,u,z\}$.
Since $H_0-F^{00}$ has an even number of vertices and $|F^{00}|=2n-4$, there exists a perfect matching $M$ by  Theorem \ref{th1-3}.
Consider $(v,y), (u,w), (z,s)\in M$ for some $y,w,s$. Now
$H_1-\{y^a, w^a, s^a\}$ has
a fractional matching $f_1$ by assumption 2 as $2n-3>3$ for $n\geq 4$. Let $M'=M-\{v,u,z\}$. Then it is clear that $M'\cup\{(y,y^a),(w,w^a),(s,s^a)\}$ and $f_1$ induce a fractional
prefect matching of $G-F$.

{\bf Case 2.} $|F^0|=2n-2$. Then $|F^1|\leq 1$. We consider two subcases.

{\em Subcase 2.1.} $F^0$ contains an odd number of vertices. Then let $v\in F^0_V$. Let $F^{00}=F^0-\{v\}$. So $|F^{00}|=2n-3$. Since $H_0-F^{00}$ has an even number of vertices, it either has a perfect matching $M$ or $F^{00}$ is trivial by assumption 1. If it has a perfect matching $M$, then the argument of Subcase 2.1 in the proof of Theorem~\ref{Th3.2} applies. Thus, we may assume that $F^{00}$ is trivial and that it is induced by a vertex, say,
$\hat{u}$. Since $|F^0_V|$ is odd, we can pick $v'\in F^0_V$, let $F^{000}=F^0-\{v'\}$,
and repeat the argument. We only have to consider the case that $F^{000}$ is trivial and that it is induced by a vertex, say, $u'$. Since $|F^{00}\setminus F^{000}|=1$, this violates Lemma \ref{lem1a}.

{\em Subcase 2.2.} $F^0$ contains an even number of vertices. We consider two subcases.

{\em Subcase 2.2.1} $H_0-F^0$ contains an isolated vertex $v$. We may assume that
$(v,v^a)$ is in $G-F$. Either $|F^0_E|\geq 2$ or $|F^0_E|=0$.
We first suppose $|F^0_E|\geq 2$. Then there is $(u,v) \in F^0_E$ such
that $(u,u^a)$ is in $G-F$. Then let $F^{00}=F^0-\{(u,v)\}$. So $|F^{00}|=2n-3$. It is not difficult to check that it follows from Lemma \ref{lem1a} that
$H_0-F^{00}$ has no isolated vertices. Since $H_0-F^{00}$ has an even number of vertices, it has a perfect matching $M$ by assumption 1. Now $(u,v)\in M$. Since $(v,v^a)$ and $(u,u^a)$ are in $G-F$ and $H_1-(F^{1}\cup\{v^a,u^a\})$ has a fractional perfect matching $f_1$ by assumption 2, it is clear
$(M-\{(u,v)\})\cup\{(v,v^a), (u,u^a)\}$ and $f_1$ induce a fractional perfect matching of $G-F$. We now assume that $|F^0_E|=0$. Then there is a vertex $u$ adjacent to $v$ such that
$(u,u^a)$ is in $G-F$. Then let $F^{00}=F^0-\{u\}$. So $|F^{00}|=2n-3$. It is not difficult to check that it follows from Lemma \ref{lem1a} that
$H_0-F^{00}$ has no isolated vertices. Since $H_0-F^{00}$ has an odd number of vertices, it has an almost perfect matching $M$ missing $w$ by assumption 1. Since $|F-F^0|\leq 1$, there exists at least one of $w^a$ and $w^b$ such that it is not in $F-F^0$, so we can assume that $w^a\notin F-F^0$. Now $(u,v)\in M$. Since $(v,v^a)$ and $(w,w^a)$ are in $G-F$ and $H_1-(F^{1}\cup\{v^a,w^a\})$ has a fractional perfect matching $f_1$ by assumption 2, it is clear
$(M-\{u\})\cup\{(v,v^a), (w,w^a)\}$ and $f_1$ induce a fractional perfect matching of $G-F$.

{\em Subcase 2.2.2} $H_0-F^0$ has no isolated vertices. This implies $H_0-F'$ has no isolated vertices if $F'\subseteq F^0$.
We first suppose $F^0$ contains edges. Then it must contain at least two. Let $(u,z)$ be such an edge.
Let $F^{00}=F^0-\{(u,z)\}$. So $|F^{00}|=2n-3$. Since $H_0-F^{00}$ has no isolated vertices and it has an even number of vertices, it has a perfect matching $M$ by assumption 1.
If $(u,z)\not\in M$, then apply assumption 2 to obtain a fractional perfect matching $f_1$ for $H_1-F^1$. Then it is clear that $M$ and $f_1$ induce a fractional
prefect matching of $G-F$. Now suppose $(u,z)\in M$. Then consider the edges $(u,u^a),(z,z^a),(u,u^b),(z,z^b)$. If they contain two independent edges that are in $G-F$, without loss of generality, assume that $(u,u^a),(z,z^a)$ are two independent edges in $G-F$. By assumption 2, $H_1-(F^{1}\cup\{u^a,z^a\})$ has a fractional perfect matching $f_1$. Therefore $(M-\{(u,z)\})\cup\{(u,u^a), (z,z^a)\}$ and $f_1$ induce a fractional perfect matching of $G-F$. So now assume that we cannot find two independent edges from them. Since $|F-F^0|\leq 1$, we can conclude that $u^a=z^b$, $u^b=z^a$ and one of $u^a$ and $u^b$
is in $F$. But this can only occur for one such pair. Thus we simply pick a different edge.

Thus we may assume that $F^0=F^0_V$. Pick two vertices $v,y\in F^0_V$. Consider $F^{00}=F^0-\{v,y\}$. So $|F^{00}|=2n-4$. Then $H_0-F^{00}$
has a perfect matching $M$ by Theorem \ref{th1-2a}. If $(v,y)\in M$, then it is easy to find a fractional perfect matching of $G-F$ with the above stated method. Thus we assume
$(v,u),(y,z)\in M$ for some other vertices $u$ and $z$. If $u$ and $z$ are adjacent, then it is also easy. Thus we assume that $u$ and $z$ are not adjacent. Now consider the edges $(u,u^a),(z,z^a),(u,u^b),(z,z^b)$. If they contain two independent edges that are in $G-F$, without loss of generality, assume that $(u,u^a),(z,z^a)$ are two independent edges in $G-F$. By assumption 2, $H_1-(F^{1}\cup\{u^a,z^a\})$ has a fractional perfect matching $f_1$. Therefore $(M-\{(v,u),(y,z)\})\cup\{(u,u^a), (z,z^a)\}$ and $f_1$ induce a fractional perfect matching of $G-F$. So assume that we cannot find two independent edges from them. Since $|F-F^0|\leq 1$, we can conclude that $u^a=z^b$, $u^b=z^a$ and one of $u^a=z^b$ and $u^b=z^a$
is in $F$. Without loss of generality, we may assume that it is $z^a$. But this can only occur for one such pair.

We consider $F^{00}=(F^0-\{v_1,v_2,v_3,v_4\})\cup \{u,u'\}$ where $(u,u')$ is in $H_0-F^0$. Note that by assumption $u'\neq z$. Then $|F^{00}|=2n-4$. Thus $H_0-F^{00}$
has a perfect matching $M$ by Theorem \ref{th1-2a}. Thus we have edges $(v_1,v_1'),(v_2,v_2'),(v_3,v_3'),(v_4,v_4')\in M$. In the worst case, all of these vertices are distinct. By the construction of $M$, none of the
$v_i'$ is $u$, though one of them may be $z$. Therefore  $(v_1',v_1'^b),(v_2',v_2'^b),(v_3',v_3'^b),$ and $(v_4',v_4'^b)$ are independent edges in $G-F$. Then the usual argument gives a required fractional perfect matching of $G-F$. However, this requires we consider
$F^{11}=F^1\cup\{v_1'^b,v_2'^b,v_3'^b,v_4'^b\}$ and $|F^{11}|\leq 5$. Since $2n-4\geq 5$ for $n\geq 5$, we are done. (If the $v_i$ and $v_i'$ are not all distinct, i.e. we have $v_i' = v_j$ for some $i \neq j$, then we need not consider the vertices $v_i'^b$ and $v_j'^b$. This means we have $|F^{11}| < 5$, and we obtain a fractional perfect matching in the same way.)

{\bf Case 3.} $|F^0|=2n-3$. By assumption 2, $H_0-F^0$ either has a fractional perfect matching $f_0$ or that $F^0$ is trivial. In the first case,
$H_1-F^1$ has a fractional perfect matching $f_1$. Thus $f_0$ and $f_1$ induce a fractional
prefect matching of $G-F$. Therefore, we assume that $F^0$ is trivial and it is induced by the vertex $v$. If $F$ is trivial with respect to $G$, then
we are done. Thus we may assume that $(v,v^a)$ is in $G-F$. We consider two subcases.

{\em Subcase 3.1.} $|F^0_V|$ is odd. We first suppose $|F^0_V|\geq 3$. Then we may find $u\in F^0_V$ such that $(u,u^a)$ is in $G-F$. Let
$F^{00}=F^0-\{u\}$. So $|F^{00}|=2n-4$. Then $H_0-F^{00}$ has an even number of vertices, and it has a perfect matching $M$ by Theorem \ref{th1-3}. Moreover, $(u,v)\in M$. Since $(v,v^a)$ and $(u,u^a)$ are in $G-F$, $H_1-(F^{1}\cup\{v^a,u^a\})$ has a fractional perfect matching $f_1$ by assumption 2, it is clear
$(M-\{(u,v)\})\cup\{(v,v^a), (u,u^a)\}$ and $f_1$ induce a fractional perfect matching of $G-F$.

We now suppose $|F^0_V|=1$ and $F^0_V=\{u\}$. Then $H_0-F^0-\{v\}=H_0-\{v,u\}$. By Theorem \ref{th1-3}, $H_0-\{v,u\}$ has a perfect matching $M$. Since $(v,v^a)$ is in $G-F$ and $H_1-(F^{1}\cup\{v^a\})$ has a fractional perfect matching $f_1$ by assumption 2, it follows that $M\cup\{(v,v^a)\}$ and $f_1$ induce a fractional perfect matching of $G-F$.

{\em Subcase 3.2.} $|F^0_V|$ is even. So $|F^0_E|$ is odd. We first suppose $|F^0_E|\geq 3$. Then there is $(u,v)\in F^0_E$ such
that $(u,u^a)$ is in $G-F$. Then let $F^{00}=F^0-\{(u,v)\}$. So $|F^{00}|=2n-4$. Since $H_0-F^{00}$ has an even number of vertices, it has a perfect matching $M$ by Theorem \ref{th1-3}.
Moreover, $(u,v)\in M$. Since $(v,v^a)$ and $(u,u^a)$ are in $G-F$ and $H_1-(F^{1}\cup\{v^a, u^a\})$ has a fractional perfect matching $f_1$ by assumption 2, it follows that $(M-\{(u,v)\})\cup\{(v,v^a), (u,u^a)\}$ and $f_1$ induce a fractional perfect matching of $G-F$.

We now suppose $|F^0_E|=1$. Pick any vertex $w$ in $H_0-F^0$ such that $(w,w^a)$ is in $G-F$. Since $|F^0_V|\geq 3$, we may let $F^{00}=(F^0-\{u\})\cup\{w\}$, where $u\in F^0$. So $|F^{00}|=2n-3$. It follows from Lemma \ref{lem1a} that
$G-F^{00}$ has no isolated vertices. Since $H_0-F^{00}$ has an even number of vertices, it has a perfect matching $M$ by assumption 1.
Moreover, $(u,v)\in M$. Since $(v,v^a),(w,w^a)$ are in $G-F$ and $H_1-(F^{1}\cup\{v^a, w^a\})$ has a fractional perfect matching $f_1$ by assumption 2, it follows that $(M-\{u\})\cup\{(v,v^a), (w,w^a)\}$ and $f_1$ induce a fractional perfect matching of $G-F$.

{\bf Case 4.} $|F^0|\leq 2n-4$.
Then $|F^1|\leq 2n-4$. By assumption 2, $H_0-F^0$ and $H_1-F^1$ have fractional perfect matchings $f_0$ and $f_1$, respectively, which induce a fractional
prefect matching of $G-F$.
\end{proof}

Thus it follows from Theorem \ref{Th3.3} that if we can show that $AQ_4$ is fractional strongly super matched, then every $AQ_n$ is fractional strongly super matched for $n\geq 5$
since assumption 1 in Theorem \ref{Th3.3} is given by Corollary \ref{th1-3a}. The question is how about the generalized augmented cubes? We consider the following subclass which
we call \emph{restricted generalized a-cubes}.
We define the set ${\cal RGAQ}_4=\{AQ_4\}$. For $n\geq 5$, ${\cal RGAQ}_n$ consists of all graphs that can be obtained in the following way: Let $G_1,G_2\in {\cal GAQ}_{n-1}$, where $G_1=(V_1,E_1)$ and $G_2=(V_2,E_2)$ may not be distinct; construct the graph $(V_1\cup V_2,E_1\cup E_2\cup M_1\cup M_2)$ where $M_1$ and $M_2$
are edge disjoint perfect matchings between $V_1$ and $V_2$ where $M_1$ and $M_2$ induce neither 4-cycles nor 6-cycles. The reason we use the term
restricted generalized a-cubes rather than restricted generalized augmented cubes because augmented cubes do not belong to this class of graph as the cross edges and
the complement edges will induce 4-cycles. The reason we consider this class is because we can utilize a result of \cite{chang2015strong}. To use it, we also need to show that
if $G\in {\cal RGAQ}_n$, then $G$ does not contain $K_{2,2n-2}$, which is implied by Lemma \ref{lem1a}, as noted ealier. Using this result (Theorem 3.5 in \cite{chang2015strong}) and
$AQ_4$ is strongly super matched (Theorem \ref{th1-3aa}), we have the following result and its immediate corollary.

\begin{theorem}{\upshape \cite{cheng2013strong,chang2015strong}}
Every restricted generalized a-cube is strongly super matched.
\end{theorem}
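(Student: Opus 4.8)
The plan is to reduce the statement to the cited classification result, Theorem 3.5 of \cite{chang2015strong}, after verifying that the defining hypotheses of ${\cal RGAQ}_n$ meet the conditions that theorem requires. I would argue according to the dimension $n$. For the base case $n=4$, the only member of ${\cal RGAQ}_4$ is $AQ_4$, which is strongly super matched by Theorem \ref{th1-3aa}, so nothing remains to be done there. All of the real content lies in the range $n\geq 5$.

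For $n\geq 5$, fix $G\in {\cal RGAQ}_n$ and write $G$ in its defining form: $G$ is assembled from two graphs $G_1,G_2\in {\cal GAQ}_{n-1}$ by two edge-disjoint perfect matchings $M_1,M_2$ between $V(G_1)$ and $V(G_2)$ that together induce neither a $4$-cycle nor a $6$-cycle. I would first record the facts that make the cited theorem applicable. By Theorem \ref{th1-3}, both $G_1$ and $G_2$ are strongly maximally matched, and so is $G$ itself, so $smp(G)=2n-1$ and it remains only to show that every optimal strong matching preclusion set is trivial. The two structural hypotheses are then the no-short-cycle condition, which holds by the very definition of ${\cal RGAQ}_n$, and the absence of a $K_{2,2n-2}$ subgraph. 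For the latter I would invoke Lemma \ref{lem1a}: since any two nonadjacent vertices of a generalized augmented cube of dimension $n$ have at most $2n-3$ common neighbors, $G$ cannot contain two vertices with $2n-2$ common neighbors, and hence has no $K_{2,2n-2}$, exactly as recorded in the remark following Lemma \ref{lem1a}.

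With these hypotheses in hand, I would apply Theorem 3.5 of \cite{chang2015strong} directly to conclude that every optimal strong matching preclusion set of $G$ is trivial; combined with $smp(G)=2n-1$ this is precisely the assertion that $G$ is strongly super matched. Since $G\in {\cal RGAQ}_n$ was arbitrary, the theorem follows.

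The main point requiring care is not a computation but the bookkeeping of hypotheses: one must check that the cycle restriction and the $K_{2,2n-2}$-freeness built into the definition of ${\cal RGAQ}_n$ match exactly what Theorem 3.5 of \cite{chang2015strong} demands, and that drawing the two halves from ${\cal GAQ}_{n-1}$ (rather than from the restricted class) is enough — that is, that the cited theorem needs only strong maximal matchedness of the two halves, which is supplied by Theorem \ref{th1-3}, and not strong super matchedness of them. Were the cited theorem to require the halves themselves to be strongly super matched, one would instead have to set up an induction internal to ${\cal RGAQ}$; but the stated reliance on Theorem \ref{th1-3aa} together with Theorem 3.5 indicates that the direct reduction suffices.
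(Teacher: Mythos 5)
Your proposal matches the paper's own justification essentially verbatim: the paper likewise proves this by combining Theorem 3.5 of \cite{chang2015strong} with the no-$4$-cycle/no-$6$-cycle condition built into the definition of ${\cal RGAQ}_n$, the $K_{2,2n-2}$-freeness supplied by Lemma \ref{lem1a}, and the fact that $AQ_4$ is strongly super matched (Theorem \ref{th1-3aa}). Your closing caveat about whether the cited theorem needs the two halves to be strongly super matched (forcing an induction internal to ${\cal RGAQ}$) rather than merely strongly maximally matched is a reasonable point of care, but the paper resolves it exactly as you do, so nothing further is required.
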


\begin{corollary}\label{th1-3b}
Every restricted generalized a-cube is even strongly super matched.
\end{corollary}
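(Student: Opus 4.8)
The plan is to observe that \emph{even strongly super matched} is simply a weakening of \emph{strongly super matched}, so the corollary will follow at once from the theorem immediately preceding it. Recall that by definition a graph $G$ is strongly super matched if it is strongly maximally matched and \emph{every} optimal strong matching preclusion set is trivial, whereas $G$ is even strongly super matched if it is strongly maximally matched and every optimal strong matching preclusion set \emph{with an even number of vertices} is trivial. The two definitions share the first clause, and the second clause of the latter quantifies only over a subfamily of the sets appearing in the second clause of the former.

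First I would invoke the preceding theorem (from \cite{cheng2013strong,chang2015strong}) to conclude that an arbitrary restricted generalized a-cube $G$ is strongly super matched; in particular $G$ is strongly maximally matched, which is exactly the first clause required for even strongly super matchedness. Next I would note that since every optimal strong matching preclusion set of $G$ is trivial, this holds \emph{a fortiori} for those optimal strong matching preclusion sets that happen to contain an even number of vertices. Thus both clauses in the definition of even strongly super matched hold for $G$, and since $G$ was an arbitrary restricted generalized a-cube, the conclusion holds for the whole class.

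There is no genuine obstacle here: the entire combinatorial content has already been established in the cited theorem, and the corollary is merely a logical consequence of comparing the two definitions. The only point worth recording carefully is that the collection of optimal strong matching preclusion sets with an even number of vertices is a subcollection of all optimal strong matching preclusion sets, so demanding triviality of \emph{all} of them automatically forces triviality of the even ones. Hence the proof amounts to a single implication and requires no further case analysis.
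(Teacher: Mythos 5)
Your proposal is correct and matches the paper's (implicit) reasoning exactly: the paper states this as an immediate corollary of the preceding theorem, since an even strongly super matched graph only needs triviality for the subfamily of optimal strong matching preclusion sets with an even number of vertices, which follows a fortiori from strong super matchedness.
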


Thus it follows from Theorem \ref{Th3.3} that if we can show that $AQ_4$ is fractional strongly super matched, then every restricted generalized a-cube is fractional strongly super matched
since assumption 1 in Theorem \ref{Th3.3} is given by Corollary \ref{th1-3b}. Finally we note that if a graph is fractional strongly super matched, then it is fractional super matched. Thus
it is not necessary to consider the second concept in this paper. We now present the main results of this paper.

\begin{theorem}
\label{maintheorem1}
Let $n\geq 4$. Then $AQ_n$ is fractional strongly maximally matched and fractional strongly super matched.
\end{theorem}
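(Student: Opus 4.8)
The plan is to prove the stronger of the two properties, namely that $AQ_n$ is fractional strongly super matched, by induction on $n\geq 4$; this already suffices, since being fractional strongly super matched forces $fsmp(AQ_n)=\delta(AQ_n)=2n-1$ and hence makes $AQ_n$ fractional strongly maximally matched as well. (If one prefers to treat the maximally matched half separately, it also follows directly from Theorem~\ref{Th3.2}: because ${\cal GAQ}_4=\{AQ_4\}$, once $AQ_4$ is known to be fractional strongly maximally matched, repeated application of Theorem~\ref{Th3.2} propagates the property to every graph in ${\cal GAQ}_n$, in particular to $AQ_n$.) In either route the whole argument collapses onto the single base case $n=4$, so the real content is to establish that $AQ_4$ is fractional strongly super matched.

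For the inductive step ($n\geq 5$) I would specialize the argument of Theorem~\ref{Th3.3} to $G=AQ_n$. By construction $AQ_n$ is obtained from two copies of $AQ_{n-1}$, so in the notation of that proof $H_0\cong H_1\cong AQ_{n-1}$, and the proof only ever invokes its two hypotheses for these particular subcubes. Assumption~1, that $AQ_{n-1}$ is even strongly super matched, is supplied unconditionally by Corollary~\ref{th1-3a}, while assumption~2, that $AQ_{n-1}$ is fractional strongly super matched, is precisely the induction hypothesis. Hence the conclusion of Theorem~\ref{Th3.3} applies to $AQ_n$, which completes the step.

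It remains to prove the base case, which I would phrase as the single statement $(\star)$: for every $F\subseteq V(AQ_4)\cup E(AQ_4)$ with $|F|\leq 7$ such that $AQ_4-F$ has no isolated vertex, $AQ_4-F$ has a fractional perfect matching. This subsumes both required facts. Indeed, if $|F|\leq 6$ then $F$ cannot isolate any vertex, since isolating a vertex of degree $7$ requires at least $7$ elements of $F$; thus $(\star)$ produces a fractional perfect matching and gives $fsmp(AQ_4)\geq 7$, which with $fsmp(AQ_4)\leq\delta(AQ_4)=7$ yields $fsmp(AQ_4)=7$. For $|F|=7$, the contrapositive of $(\star)$ says that every optimal FSMP set leaves an isolated vertex, which is exactly fractional strong super-matchedness. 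To prove $(\star)$ I would invoke Theorem~\ref{pi}: if $AQ_4-F$ had no fractional perfect matching, there would be a set $S\subseteq V(AQ_4-F)$ whose deletion leaves a set $W$ of isolated vertices with $|W|\geq|S|+1$. Because $AQ_4-F$ has no isolated vertex, each $w\in W$ has at least one surviving neighbour and all its surviving neighbours lie in $S$; in particular $W$ is independent in $AQ_4-F$. Counting incidences, every $AQ_4$-neighbour of a vertex $w\in W$ lies in $S$, in $F_V$, or across an edge of $F_E$, so writing $S^{+}=S\cup F_V$ the number of $AQ_4$-edges leaving $W$ that miss $S^{+}$ is at most $2|F_E|$, whence $7|W|\leq e_{AQ_4}(W,S^{+})+2|F_E|\leq 7|S^{+}|+2|F_E|$. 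Combining this with $|W|\geq|S|+1$, the bound $|F|\leq 7$, the fact that $AQ_4$ has only $16$ vertices, and the cap from Lemma~\ref{lem1a} that any two vertices share at most five common neighbours (so $AQ_4$ contains no $K_{2,6}$ and $e_{AQ_4}(W,S^{+})$ cannot be too large when $|S^{+}|$ is small) leaves only a short list of admissible sizes for $S$ and $W$.

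The hard part is exactly this base case. The counting above eliminates most configurations, but a few dense residual ones — small $S$ with $|W|$ just above $|S|$, in which the edges between $W$ and $S^{+}$ are forced to be almost complete — are not excluded by $7$-regularity and the no-$K_{2,6}$ bound alone, and must be ruled out by examining the actual adjacency structure of $AQ_4$. Since $AQ_4$ has only $16$ vertices, this is a finite check that can be completed by hand or, exactly as was already done for Lemma~\ref{lem1a}, confirmed by computer; verifying $(\star)$ then finishes the base case and hence the theorem.
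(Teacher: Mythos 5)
Your proposal follows essentially the same route as the paper: the inductive step is exactly the paper's application of Theorem~\ref{Th3.3} with assumption~1 discharged by Corollary~\ref{th1-3a} and assumption~2 by induction, and everything reduces to showing $AQ_4$ is fractional strongly super matched. Your base case likewise ends in a finite verification of $AQ_4$ (your statement $(\star)$ is an equivalent reformulation, and your Theorem~\ref{pi} counting narrows but does not eliminate the residual configurations), which is precisely what the paper does via the computer check in Lemma~\ref{mainlemma}.
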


\begin{theorem}
\label{maintheorem2}
Every generalized augmented cube is fractional strongly maximally matched and every restricted generalized a-cube is fractional strongly super matched.
\end{theorem}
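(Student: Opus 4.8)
The plan is to prove both assertions by induction on $n$, taking Theorem \ref{maintheorem1} as the base case and the two recursive results already established as the inductive steps. Since $\mathcal{GAQ}_4 = \{AQ_4\}$ and $\mathcal{RGAQ}_4 = \{AQ_4\}$, Theorem \ref{maintheorem1} gives both that $AQ_4$ is fractional strongly maximally matched and that it is fractional strongly super matched, settling the case $n=4$ of each statement.

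For the first assertion, suppose inductively that every graph in $\mathcal{GAQ}_{n-1}$ is fractional strongly maximally matched. This is precisely the hypothesis of Theorem \ref{Th3.2}, whose conclusion is that every graph in $\mathcal{GAQ}_n$ is fractional strongly maximally matched. Iterating from the base case $n=4$ then yields the claim for all $n\geq 4$, so every generalized augmented cube is fractional strongly maximally matched; the only thing to check is that the regularity and vertex counts of the class make Theorem \ref{Th3.2} directly applicable, which they do.

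For the second assertion I would again induct on $n$, now propagating the stronger super-matched property along the restricted class, with the inductive step supplied by Theorem \ref{Th3.3}. Inspecting its proof, one notices that wherever assumption 2 is invoked the set deleted from the piece $H_1$ has size strictly less than $2n-3$, so there the fractional strongly maximally matched property — already established for all of $\mathcal{GAQ}_{n-1}$ in the first assertion — suffices; the genuine use of a super-matched hypothesis is assumption 1, the even-strong-super-matchedness of $H_0$, furnished by Corollary \ref{th1-3b}. The step I expect to be the main obstacle is the scope of assumption 1: Corollary \ref{th1-3b} provides even-strong-super-matchedness only for restricted members, whereas a general member of $\mathcal{GAQ}_{n-1}$ may contain a complete bipartite subgraph $K_{2,2n-4}$ and so fail to be super matched. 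This is exactly where the defining restriction of $\mathcal{RGAQ}_n$ — that the joining matchings induce no $4$- or $6$-cycle, hence (with Lemma \ref{lem1a}) no such $K_{2,\cdot}$ subgraph — must be used. I would therefore verify carefully that the recursion building a graph in $\mathcal{RGAQ}_n$ feeds Theorem \ref{Th3.3} only pieces that inherit both the even-strong-super-matched and the fractional-strong-super-matched properties, so that the induction closes and every restricted generalized a-cube is fractional strongly super matched.

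Finally, I would record the observation already noted in the text, that a fractional strongly super matched graph is automatically fractional super matched. This confirms that the two properties appearing in the statement are all that require separate treatment, and both have been obtained by the two inductions above.
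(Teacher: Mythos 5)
Your proposal is correct and follows essentially the same route as the paper: the base case is the computer-verified fact that $AQ_4$ is fractional strongly maximally matched and fractional strongly super matched (Lemma~\ref{mainlemma}, i.e.\ Theorem~\ref{maintheorem1}), and the inductive steps are exactly Theorems~\ref{Th3.2} and~\ref{Th3.3}, the latter with assumption~1 supplied by Corollary~\ref{th1-3b}. The scoping issue you flag --- that the hypotheses of Theorem~\ref{Th3.3} are stated for all of ${\cal GAQ}_{n-1}$ while the induction on the restricted class only certifies ${\cal RGAQ}_{n-1}$ --- is present in the paper as well, which implicitly treats the recursion defining ${\cal RGAQ}_{n}$ as passing through restricted pieces; your proposed resolution is the intended reading (and note that in Case~3 of Theorem~\ref{Th3.3} assumption~2 is applied to $H_0$ with a set of size exactly $2n-3$, so the full super-matched hypothesis, not just maximal matchedness, is genuinely needed there).
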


We will complete the proof of these results in the next section by showing that $AQ_4$ is fractional strongly maximally matched and fractional strongly super matched.
We remark that \cite{cheng2013strong} showed that $AQ_4$ is strongly maximally matched and strongly super matched via computer verification. We could do the same here as it is not more difficult. Determining whether $AQ_4-F$ has a fractional perfect matching is just as simple as determining $AQ_4-F$ has a perfect matching or an almost prefect
matching, as the first problem can be solved by solving a simple linear program and the second problem can be solved by an efficient matching algorithm.

One may wonder whether Theorems \ref{maintheorem1} and \ref{maintheorem2} can be strengthened from all restricted generalized a-cubes and augmented cubes to all generalized augmented cubes in terms of fractional strongly super matchedness. We did not investigate this. However, we will point out in the proof of Theorem 3.5 in \cite{chang2015strong},  the condition of no $4$-cycles and no $6$-cycles is important.

\section{The base case}
In this section, we prove the following lemma, which is the base case, of our argument via a computational approach.

\begin{lemma}
\label{mainlemma}
$AQ_4$ is fractional strongly maximally matched and fractional strongly super matched.
\end{lemma}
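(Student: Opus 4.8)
The plan is to establish both properties by a finite computation, reducing each to a family of feasibility tests for fractional perfect matchings. Since $AQ_4$ is $7$-regular we have $fsmp(AQ_4)\le\delta(AQ_4)=7$ for free, so the two assertions reduce to the following. First, fractional strong maximal matchedness is exactly the statement that $AQ_4-F$ has a fractional perfect matching for every $F\subseteq V(AQ_4)\cup E(AQ_4)$ with $|F|\le 6$. Second, unwinding the definition, fractional strong super matchedness is the assertion that an optimal set (so $|F|=7$ with $AQ_4-F$ having no fractional perfect matching) always isolates a vertex; equivalently, every $F$ with $|F|=7$ for which $AQ_4-F$ has no isolated vertex must still admit a fractional perfect matching, and it is this equivalent form that I would check.

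Before enumerating I would trim the first claim. If $H$ has a fractional perfect matching and $H'$ is obtained from $H$ by adding edges on the same vertex set, then extending the weighting by $0$ on the new edges gives a fractional perfect matching of $H'$; hence for a fixed deleted vertex set $F_V$ the hardest instances are those deleting as many edges as the budget allows. Consequently it suffices to test the sets with $|F|=6$ exactly, padding $F_E$ up to $6-|F_V|$ edges, which is always possible because $AQ_4-F_V$ retains at least $14$ edges when $|F_V|\le 6$; every smaller set then follows by this monotonicity. The super-matched claim has $|F|=7$ fixed and admits no such reduction, so there I would enumerate directly.

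For the feasibility test itself I would use the classical equivalence that a graph has a fractional perfect matching if and only if its bipartite double cover has an ordinary perfect matching. Indeed, a perfect matching $M$ of the double cover yields $f(uv)=\frac{1}{2}\left(\mathbf 1[(u,0)(v,1)\in M]+\mathbf 1[(u,1)(v,0)\in M]\right)$, for which $\sum_v f(uv)=1$ at every vertex; conversely the half-integral extreme points of the fractional matching polytope, namely matching edges together with $\frac{1}{2}$-weighted odd cycles, lift to a perfect matching of the double cover. Thus each instance $AQ_4-F$ is decided by a single bipartite matching computation on a $32$-vertex graph, which is precisely the simple linear program alluded to earlier and is no harder than the integer matching test employed by \cite{cheng2013strong}; alternatively one may verify the Scheinerman condition $i((AQ_4-F)-S)\le|S|$ of Theorem \ref{pi} directly. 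To keep the enumeration feasible I would exploit the vertex-transitivity and the full automorphism group of $AQ_4$: fix the first element of $F$ up to its orbit (there are only a few edge orbits, since internal, cross, and complement edges need not be equivalent) and quotient the remaining choices by the corresponding stabilizer, reducing the roughly $1.6\times 10^8$ size-$6$ sets and $1.6\times 10^9$ size-$7$ sets by the order of the automorphism group.

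The main obstacle here is not conceptual but one of scale and trustworthiness. Because an odd graph can still carry a fractional perfect matching, as the introduction stresses, no parity shortcut is available and essentially every surviving case must genuinely be decided, so the computation must be simultaneously complete and efficient. The double-cover reduction supplies a fast and reliable oracle and the symmetry reduction brings the case count into comfortable range; the remaining care lies in certifying that the orbit enumeration overlooks no case and, for the super-matched part, in correctly separating the instances that already isolate a vertex from those that do not.
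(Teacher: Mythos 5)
Your proposal is correct and matches the paper's approach: the paper likewise proves this lemma by an exhaustive computer verification, checking that every $7$-element fault set is either trivial or leaves a fractional perfect matching (using the same edge-padding monotonicity to cover $|F|\le 6$, and vertex-transitivity plus $mp(AQ_4)=2n-1$ to cut the case count), with the feasibility oracle implemented as a linear program rather than your bipartite-double-cover matching test. The two oracles are equivalent and your organization into a size-$6$ pass and a size-$7$ pass is just a repackaging of the paper's single pass over nontrivial $7$-sets.
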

\begin{proof}
	This result was verified by a computer program written in the Python language, using the NetworkX package (https://networkx.github.io/) to represent the structure of the graph and the SciPy package (https://www.scipy.org/) to compute fractional perfect matchings. The program verified that for any $7$-element fault-set $F$, either $F$ is trivial or $AQ_4-F$ has a fractional perfect matching. We note that this condition, in addition to verifying that $AQ_4$ is fractional strongly super matched, is sufficient to verify that $AQ_4$ is fractional strongly maximally matched; this follows from the fact that any fault set $F$ in $AQ_4$ with $6$ or fewer elements may be extended to a non-trivial $7$-element fault set $F'$ by including additional edges, and that if $AQ_4 - F$ does not have a fractional perfect matching then $AQ_4 - F'$ also does not have a fractional perfect matching. We may reduce the number of cases that need to be checked by noting that Theorem \ref{th1-1} implies that $F$ must contain at least one vertex, and furthermore that the vertex-transitivity of $AQ_4$ implies that one vertex of $F$ may be fixed. The additional simplifying assumption was made that no fault edge is incident to any fault vertex. The computer verification took about two days on a typical desktop computer.
\end{proof}

Originally we intended to prove Lemma~\ref{mainlemma} theoretically. Indeed, we have a long proof with many cases to establish that $AQ_4$ is fractional strongly maximally matched. The super version will be
even more involved. Thus we decided a computational approach is cleaner. Moreover, it demonstrates how even a straightforward implementation is useful. We could reduce the number of cases to check by further applying properties of $AQ_4$ but we decided it is not necessary to increase the complexity of the program. Indeed the program is short. The program is given in the Appendix.

\section{Conclusion}
The fractional strong matching preclusion problem was introduced in \cite{liu2017fractional}. In this paper, we explore this parameter for a large class of cube-type interconnection networks including augmented cubes. It would be interesting to consider this parameter in future projects for competitors of cube-like networks such as $(n,k)$-star graphs and arrangement graph. Another possible direction is to consider this parameter for general
products of networks.

\acknowledgements
\label{sec:ack}
We would like to thank the anonymous referees for a number of helpful comments and suggestions.

\nocite{*}
\bibliographystyle{abbrvnat}
\bibliography{fmpaug}
\label{sec:biblio}

\appendix
\section {Program for Lemma~\ref{mainlemma}}

\begin{verbatim}
#strong fractional matching preclusion problem for the augmented cube AQ_4
#16 vertices, 7-regular
import networkx as nx
import itertools as itr
import scipy
import numpy
from timeit import default_timer as timer

#defines AQ_4 using the networkx representation
def aq4():
    A = nx.Graph()
    c = ['0','1']
    for i in range(3):
        d = [s+'1' for s in c]
        c = [s+'0' for s in c]
        c = c + d
    p = itr.combinations(c,2)
    for i,j in p:
        if is_adj_aq4(i,j):
            A.add_edge(i,j)
    return A

#function to determine adjacency between vertices in the augmented cube
def is_adj_aq4(i,j):
    for n in range(len(i)):
        if ((i[n] == string_complement(j[n])) and (i[0:n] == j[0:n])
        and (i[n+1:] == j[n+1:]))
        or ((i[:n] == j[:n])
        and (i[n+1:] == string_complement(j[n+1:])) and (i != j)):
            return True
    return False

#auxiliary function used in the adjacency check
def string_complement(i):
    s = ''
    for char in i:
        if char == '0':
            s = s + '1'
        else:
            s = s + '0'
    return s

#determines if a matching preclusion set is basic or not
def is_basic(G):
    if min(G.degree().values()) == 0:
        return True
    else:
        return False


#code to determine the fsmp sets of AQ_4 with n vertices removed
#here we consider n>0
#n=0 requires different logic, and has been checked previously
def fpm_aq4(n):
    G = aq4()
    #by vertex transitivity, we can remove one vertex WLOG
    G.remove_node('0000')
    count = 0
    tcount = 0
    start = timer()

	#choose additional vertices to be removed, for a total of n
    for r in itr.combinations(G.nodes(),n-1):
        H = G.copy()
        H.remove_nodes_from(r)

      	#impose a fixed order on the edges to construct the LP matrix
        edge_order = {}
        for e in range(len(H.edges())):
            edge_order[H.edges()[e]] = e
        M = [[] for i in range(len(H.nodes()))]
        for v in range(len(H.nodes())):
            for e in range(len(H.edges())):
                if H.nodes()[v] in H.edges()[e]:
                    M[v].append(1)
                else:
                    M[v].append(0)
        b = [1 for i in range(len(H.nodes()))]
        c = [-1 for i in range(len(H.edges()) - 7 + n)]
		
		#choose edges to remove, and remove corresponding columns
        for l in itr.combinations(H.edges(),7-n):
            ll = [edge_order[i] for i in l]
			#solve the fractional matching LP
            res = scipy.optimize.linprog(c = c, A_ub = numpy.delete(M,ll,axis=1), b_ub = b)
            #if no FPM exists, check the obstruction set
            #if basic, do nothing
            #if not basic, record
            if res.fun > -1*.5*len(H.nodes()):
                tcount = tcount + 1
                print('fsmp set found: '+str(tcount)+' times')
                K = H.copy()
                K.remove_edges_from(l)
                if not is_basic(K):
                    print('nontrivial fsmp set found')
                    with open('fsmp-log.txt','w') as f:
                        f.writelines([str(r),str(l),str(K.degree()[min(K.degree())]),
                        str(len(K.nodes())),str(len(K.edges())),str(K.degree()),
                        str(res),str(M),str(b),str(c)])
                        f.close()
            count = count + 1
            if count % 1000 == 0:
                print(str(count) + '\n')
            if count == 10000:
                end = timer()
                print('test done:' + str(end - start) + ' ' + 'seconds elapsed')
    end = timer()
    print('all strong fmp sets of AQ4 with ' + str(n) + ' vertices are basic')
    print(str(end - start) + ' ' + 'seconds elapsed')
    return True

if __name__ == "__main__":
    if fpm_aq4(7) == True:
        input()
\end{verbatim}

\end{document}